\tikzset{->-/.style={decoration={
  markings,
  mark=at position .5 with {\arrow{>}}},postaction={decorate}}}
\tikzset{-->-/.style={decoration={
  markings,
  mark=at position .65 with {\arrow{>}}},postaction={decorate}}}
\tikzset{played/.style={>=stealth',->-}}
\tikzset{makerlastplayed/.style={>=stealth',->-,blue,very thick}}
\tikzset{breakerlastplayed/.style={dashed, >=stealth',->-,red,thick}}
\tikzset{breakerlastplayed2/.style={dotted,>=stealth',->-,red,thick}}
\tikzset{available/.style={>=stealth',->-,lightgray}}
\newcommand{\R}{\mathbb{R}}
\newcommand{\Z}{\mathbb{Z}}
\newcommand{\floor}[1]{\left\lfloor #1 \right\rfloor}
\newtheorem{theorem}{Theorem}[section]
\newtheorem{proposition}[theorem]{Proposition}
\newtheorem{definition}[theorem]{Definition}
\newtheorem{conjecture}[theorem]{Conjecture}
\numberwithin{equation}{section}
\newcommand{\Gb}{G_b}
\newcommand{\gb}{g_b}
\newcommand{\gbinv}{g^{-1}_b}
\newcommand{\gDomUB}{\floor{\gbinv(1/2)}}
\newcommand{\diffContribution}{\tau}
\newcommand{\costf}{Q}
\newcommand{\epb}[1]{\varepsilon_{b,#1}}
\newcommand{\taub}[1]{\tau_{b,#1}}
\newcommand{\pb}[1]{\delta_{b,#1}}
\newcommand{\smb}[1]{s_{b,#1}}
\newcommand{\myfloor}[1]{\underline{#1}}
\newcommand{\CB}{C_B}
\newcommand{\Cb}{C_b}
\newcommand{\minfrac}[1]{\phi_{b,#1}}
\title{An Improved Upper Bound on the Threshold Bias of the Oriented-cycle Game}
\author{
     Anita Liebenau \thanks{School of Mathematics and Statistics, UNSW Sydney, NSW 2052, Australia. 
  Email: \texttt{a.liebenau@unsw.edu.au}}
\and Abdallah Saffidine \thanks{Potassco Solutions, Potsdam, Germany. 
  Email: \texttt{abdallah.saffidine@gmail.com}}
\and Jeffrey Yang \thanks{School of Mathematics and Statistics, UNSW Sydney, NSW 2052, Australia. Email: \texttt{j.z.yang@student.unsw.edu.au}}
}
\begin{document}
\maketitle

\begin{abstract}
We study the $b$-biased Oriented-cycle game where two players, OMaker and OBreaker, take turns directing the edges of $K_n$ (the complete graph on $n$ vertices). In each round, OMaker directs one previously undirected edge followed by OBreaker directing between one and $b$ previously undirected edges. The game ends once all edges have been directed, and OMaker wins if and only if the resulting tournament contains a directed cycle. Bollob\'as and Szab\'o asked the following question: what is the largest value of the bias $b$ for which OMaker has a winning strategy? Ben-Eliezer, Krivelevich and Sudakov  proved that OMaker has a winning strategy for $b \leq n/2 - 2$. In the other direction, Clemens and Liebenau proved that OBreaker has a winning strategy for $b \geq 5n/6+2$. Inspired by their approach, we propose a significantly stronger strategy for OBreaker which we prove to be winning for $b \geq 0.7841n + O(1)$.
\end{abstract}

\section{Introduction}
\emph{Orientation games} are played on the board $E(K_n)$ by two players, \emph{OMaker} and \emph{OBreaker}, who take turns directing previously undirected edges until all edges have been directed. The resulting directed graph is called a \emph{tournament}. OMaker wins if and only if the resulting tournament has some prespecified property $\mathcal{P}$. For example, for a given directed graph $H$, the \emph{oriented $H$-game} is defined by the property $\mathcal{P}_H$ of containing a copy of $H$.

As orientation games often favour OMaker, we usually study biased orientation games.  In \textit{$(a:b)$ biased orientation games} played under \textit{monotone rules}, OMaker directs between 1 and $a$ undirected edges on each of their turns, and OBreaker directs between 1 and $b$ undirected edges on each of their turns. This is in contrast to \textit{strict rules} where OMaker directs exactly $a$ undirected edges on each of their turns and OBreaker directs exactly $b$ edges on each of their turns. If there are fewer than the specified number of edges available, then the player directs all remaining edges. 

Observe that $(a:b)$ biased orientation games are closely related to $(a:b)$ biased Maker--Breaker games played on the edge set of the complete graph~$K_n$. In the latter, the players instead claim edges, and Maker wins if the subgraph formed by their edges satisfies some pre-defined property~$\mathcal{P}$. Maker--Breaker games played on~$K_n$ often have a corresponding orientation game variant. For instance, the Maker--Breaker $H$-game is analogous to the aforementioned oriented $H$-game. A significant amount of literature has been devoted to Maker--Breaker games \cite{beck1982remarks,beck1985random,beck1994deterministic,bednarska2000biased,chvatal1978biased,erdos1973combinatorial}.

Despite their similarities, orientation games are \textit{not} Maker--Breaker games: if a player directs the edge $\{u,v\}$ to form the arc $(u,v)$, then this also makes the arc $(v,u)$ unavailable. Also, in Maker--Breaker games, claiming more edges can only help a player whereas in orientation games, the edges directed by a player may be exploited by the other player. As such, while Maker--Breaker games typically assume strict rules, we often study orientation games played under monotone rules.

Finally, it may also be noted that both orientation games and Maker--Breaker games belong to the same larger family of \textit{unordered CNF games} as introduced by Ahlroth and Orponen \cite{ahlroth2012unordered}, and later studied by Rahman and Watson \cite{rahman2020complexity, rahman2025tractable, rahman2022erd}. Unordered CNF games, which are the unordered version of QBF, are played on a CNF formula by two players: \textit{Satisfier} and \textit{Falsifier}. Here, the two players take turns assigning truth values to variables of their choice, with Satisfier aiming to make the final formula evaluate to True and Falsifier aiming to make the final formula evaluate to False. Maker--Breaker games can be viewed as unordered CNF games played on monotone formulas, with Maker assuming the role of Falsifier and Breaker assuming the role of Satisfier. Here, if the clauses correspond to Maker's winning sets, then Breaker seeks to satisfy every clause by claiming at least one element from each winning set, while Maker tries to falsify the formula by claiming all elements in some winning set. Orientation games can also be formulated as unordered CNF games, by associating variables with unordered pairs of vertices and truth values with orientations. In this context, clauses express that the desired structural property does not hold. For instance, for the oriented $H$-game, each potential copy of $H$ induces a clause expressing that the copy is blocked (that is, at least one of its edges is oriented incorrectly from OMaker's perspective). Similar to Maker--Breaker games, OMaker assumes the role of Falsifier while OBreaker assumes the role of Satisfier.

Of particular interest are $(1:b)$ biased orientation games, called \textit{$b$-biased orientation games}. Here, OMaker directs one undirected edge on each of their turns. On the other hand, OBreaker directs between 1 and $b$ undirected edges on their turn. Observe that increasing $b$ can only benefit OBreaker as they can opt to direct fewer than $b$ edges per round. That is, these games are \textit{bias monotone}. Therefore, for each $K_n$ and non-degenerate property $\mathcal{P}$, there exists some \textit{threshold bias} $t(n, \mathcal{P})$ for which OMaker wins if and only if $b \leq t(n, \mathcal{P})$.  It is not clear whether strict $b$-biased orientation games are bias monotone. Thus, we denote by $t^+(n,\mathcal{P})$ the maximum value of $b$ for which OMaker wins the strict $b$-biased orientation game. It is clear that $t(n,\mathcal{C}) \leq t^+(n,\mathcal{C})$ but not much else is known in general about the relationship between the two quantities.

We study the \textit{Oriented-cycle game} which is an orientation game defined by the property $\mathcal{C}$ of containing a directed cycle. The strict $b$-biased Oriented-cycle game was introduced by Alon and later studied by Bollobás and Szabó \cite{BOLLOBAS199855}. The strict and monotone variants of the $b$-biased Oriented-cycle game have since been studied in various other works \cite{BEDNARSKA2005271,BENELIEZER20121732,CLEMENS201721}. We briefly describe some known results.

Regarding the lower bound of the threshold bias, Alon showed that $t^+(n,\mathcal{C}) \geq \floor{n/4}$ (unpublished but mentioned in \cite{BOLLOBAS199855}). Bollobás and Szabó \cite{BOLLOBAS199855} were able to refine Alon's argument to obtain $t^+(n,\mathcal{C}) \geq \floor{(2-\sqrt{3})n}$. Ben-Eliezer, Krivelevich and Sudakov \cite{BENELIEZER20121732} proved  that OMaker wins for $b \leq n/2 - 2$ (under both monotone and strict rules) via a relatively simple strategy: close a directed cycle if possible and otherwise, extend the longest directed path. This remains the best known strategy for OMaker at the time of writing. Hence, we know that $t(n,\mathcal{C}) \geq n/2 - 2$.

In the other direction, it is easily observed that OBreaker wins trivially for $b \geq n-2$. Under monotone rules, the trivial strategy for OBreaker goes as follows: whenever OMaker directs some arc $(u,v)$, OBreaker directs all available arcs of the form $(u,w)$ for $w \in V$. The trivial strategy for OBreaker in the strict game is described in \cite{CLEMENS201721} and we omit it here. Bollobás and Szabó \cite{BOLLOBAS199855} conjectured that OBreaker in fact only wins for $b \geq n -2$ so that $t^+(n,\mathcal{C}) = n-3$. This conjecture was refuted by Clemens and Liebenau \cite{CLEMENS201721} who showed that $t^+(n,\mathcal{C}) \leq 19n/20 - 1$ for large enough $n$. Regarding the monotone game, it was also shown in \cite{CLEMENS201721} that OBreaker has a winning strategy for $b \geq 5n/6+2$. Thus, it was established that $t(n,\mathcal{C}) \leq 5n/6 + 1$.

In this paper, we focus on the $b$-biased Oriented-cycle game played under monotone rules. We describe how the strategy in \cite{CLEMENS201721} for OBreaker in the monotone game can be significantly improved to allow OBreaker to ensure that the resulting tournament remains acyclic for notably smaller values of the bias. Concretely, we prove that OBreaker wins for $b \geq 0.7841n + O(1)$ which implies the main theorem of this paper.
\begin{restatable}{thm}{mainresultonthreshold}\label{thm:main-result-on-threshold}
    $t(n,\mathcal{C}) \leq  0.7841n + O(1)$.
\end{restatable}
In fact, we conjecture that $t(n,\mathcal{C}) \geq 3n/4 - O(1)$ and that this upper bound on $t(n,\mathcal{C})$ is somewhat close to optimal. We say more about this in the concluding remarks.

The remainder of the paper is structured as follows. After we introduce some general notation and terminology, we outline the main elements of the strategy for OBreaker in \cite{CLEMENS201721} and describe how it can be improved. We then formally define the directed graph structures which we utilise in our strategy and describe their key properties. Following this, we carefully define several functions and quantities which we use to describe a stronger strategy for OBreaker. Finally, we prove several technical results, which allow us to quantitatively bound the values of the bias for which the new strategy is winning.

\subsection{General Notation and Terminology}
We adopt the notation and terminology in~\cite{CLEMENS201721}. A \textit{directed graph} (digraph) is a pair $D = (V,E)$ consisting of a \textit{vertex set} $V = \{1,2,\dots,n\}$ and an \textit{arc set} $E \subseteq V \times V$. Note that we often identify a digraph $D$ with its arc set $E$. The elements of $V$ are called \textit{vertices} and the elements of $D$ are called \textit{arcs} or \textit{directed edges}. Each arc is an ordered pair of the form $(v,w)$ and its underlying set $\{v,w\}$ is called an \textit{edge}. A \textit{loop} is an arc of the form $(v,v)$. For an arc $(v,w)$, we call $(w,v)$ its \textit{reverse arc}. We define a \textit{simple} digraph as a digraph without loops or pairs of arcs which are the reverse of each other. In this paper, we only consider simple digraphs. Let $\mathcal{L} = \{(v,v) : v\in V\}$ denote the set of loops and let $\overleftarrow{D} := \{(w,v): (v,w) \in D\}$ denote the set of all reverse arcs.  We write $\mathcal{A}(D) := (V\times V)\setminus (D \cup \overleftarrow D \cup \mathcal{L})$ to denote the set of all \textit{available arcs}.

For an arc $e \in D$, we write $e^+$ for its tail and $e^-$ for its head so that $e = (e^+, e^-)$. We write $D^+ := \{e^+ : e \in E\}$ for the set of all tails and $D^- := \{e^- : e \in E\}$ for the set of all heads. We say that $S = (V_S, E_S)$ is a \textit{subdigraph} of $D$ if $V_S \subseteq V$ and $E_S \subseteq E$. For a subset $A \subseteq V$, we write $D[A] := D \cap (A \times A)$ to denote the directed subdigraph of $D$ consisting of arcs spanned by $A$.

A digraph $D = (V,E)$ where $V = \{v_0, v_1, \dots, v_k\}$ (where the $v_i$ are all distinct) is a \emph{directed path} if $E = \{(v_0,v_1), (v_1,v_2), \dots, (v_{k-1},v_k)\}$ and a \emph{directed cycle} if $E= \{(v_0,v_1),\dots,(v_{k-1},v_k)\}\cup\{(v_k,v_0)\}$. A digraph is \textit{acyclic} if it does not contain any directed cycles as a subdigraph. For two disjoint sets $A,B \subseteq V$, we call the pair $(A,B)$ a \textit{uniformly directed biclique} (or UDB for short) if for all $v \in A, w\in B$, we have that $(v,w) \in D$.

An \textit{orientation} of an undirected graph is an assignment of a direction to each edge to produce a directed graph. A \textit{tournament} $T$ is an orientation of the complete graph $K_n$. In our study of the Oriented-cycle game, two players take turns directing the edges of $K_n$ until a tournament $T$ is obtained. Let $D$ denote the directed graph on $n$ vertices formed by the arcs directed so far by the two players. We say that a player \textit{directs the edge} $(v,w)$ if they direct the pair $\{v,w\}$ from $v$ to $w$. This adds $(v,w)$ to $D$ and prevents $(w,v)$ from being added to $D$. For convenience, we define $D + e := D \cup \{e\}$.

\section{Safe Digraphs}
We outline the key elements of the strategy for OBreaker proposed in \cite{CLEMENS201721} to motivate our approach. Their strategy relies on two types of digraph structures: UDBs and $\alpha$-structures. We define $\alpha$-structures formally below but describe their key properties now. An $\alpha$-structure cannot be made cyclic with the addition of a single arc. Given an $\alpha$-structure $H$ and an available arc $e \in \mathcal{A}(H)$, there exists a set of available arcs $F$ for which $(H + e) \cup F$ is also an $\alpha$-structure. The \textit{rank} of an $\alpha$-structure provides an upper bound on the number of additional arcs required to create the new $\alpha$-structure. 

At a high level, OBreaker maintains that the digraph $D$ contains a UDB $(A,B)$ where both $D[V\setminus B]$ and $D[V\setminus A]$ are $\alpha$-structures such that each arc in $D$ starts in $A$ or ends in $B$. Let us call a digraph having these properties a \textit{safe} digraph; we provide a formal definition later and refer the reader to \cref{pic:safe} for an illustration. By maintaining that $D$ is safe, any new edge OMaker directs must lie entirely within $V\setminus A$ or $V \setminus B$. However, since OBreaker maintains that $D[V\setminus B]$ and $D[V\setminus A]$ are $\alpha$-structures, OMaker can never close a cycle on their turn. As we will see shortly, the number of edges OBreaker must direct to recover the safety of the digraph depends on the sizes of $A$ and $B$, and the ranks of the $\alpha$-structures $D[V\setminus B]$ and $D[V\setminus A]$.

The strategy for OBreaker in \cite{CLEMENS201721} consists of three stages. In all three stages, OBreaker maintains that the digraph is safe. Let $(A,B)$ denote the UDB maintained by OBreaker which witnesses that the digraph $D$ is safe. Let $k$ denote the rank of the $\alpha$-structure $D[V\setminus B]$ and $\ell$ denote the rank of the $\alpha$-structure $D[V\setminus A]$. We denote 
\[s := \min(|A| - k, |B| - \ell).\]
In the first stage of their strategy, OBreaker increases $s$ by 1 in each round. It can be shown that for $b \geq 5n/6 + 2$, OBreaker can increase $s$ until $s \geq n - b$. Once this is achieved, OBreaker moves onto the second stage. Here, OBreaker keeps the value of $s$ fixed and only maintains that the digraph is safe. It can be shown that the number of edges OBreaker must direct to re-establish that the digraph is safe after OMaker's move is at most $\max(|B| + k, |A| + \ell)$. However, since $s \geq n - b$, we have that
\[ \max(|B| + k, |A| + \ell) \leq n - \min(|A| - k, |B| - \ell) = n - s \leq b.\]
During the second stage, at least one vertex is added to either $A$ or $B$ in each round. Once $A \cup B = V$, the third stage starts. By this point, OBreaker will have effectively \textit{divided} the board into two sets $A$ and $B$ each of size at most $b$. OBreaker can then easily ensure that both of these smaller digraphs remain acyclic.

To improve upon the result in \cite{CLEMENS201721}, we have the following key insight: if OBreaker can increase $s$ at a faster rate during the first stage, then OBreaker can achieve $s \geq n - b$ for smaller values of $b$. Furthermore, as long as $s \geq n - b$ is achieved while maintaining that the digraph is safe, OBreaker can always go on to complete the remaining stages of the strategy to win the game. Thus, we seek to determine the largest value of $s$ that can be achieved for a given value of the bias $b$.

Having motivated our approach, we now dive into the technical details. We first present the formal definition of $\alpha$-structures as defined in \cite{CLEMENS201721}.
\begin{definition}\label{def:alpha}
Let $V$ be a set of vertices, $r$ be a non-negative integer and $D$ be a simple digraph with vertex set $V$. Then $D$ is called an \textbf{$\alpha$-structure of rank $r$} if there exist $k \leq r$ arcs $e_1,\dots,e_k \in D$ such that for all $u,v \in V$, we have 
\[\text{$(u,v) \in D$ if and only if $(u,v) = (e_i^+, e_j^-)$ for some $1 \leq i \leq j \leq k$}.\]
The arcs $e_1, \dots, e_k$ are called \textbf{decisive arcs} of the $\alpha$-structure on $D$.
\end{definition}
The prime example of an $\alpha$-structure is a transitive tournament, where the edges of the longest directed path form the decisive arcs (in order). A different example is given on the right in \cref{fig:alpha-structures}, and we invite the reader to verify that both examples satisfy \cref{def:alpha}. 

\begin{figure}[H]
    \centering
    \scalebox{1.15}{\begin{tikzpicture}[]
\coordinate (v0) at (-6,0);
\coordinate (v1) at (-4,0);
\coordinate (v2) at (-2,0);
\coordinate (v3) at (0,0);
\coordinate (v4) at (2,0);
\coordinate (v5) at (4,0);
\coordinate (v6) at (6,0);
\fill (v0) circle (1pt);
\fill (v1) circle (1pt);
\fill (v2) circle (1pt);
\fill (v3) circle (1pt);
\fill (v4) circle (1pt);
\path (v0) edge[makerlastplayed] node[below] {$e_1$}(v1);
\path (v1) edge[makerlastplayed] node[below] {$e_2$} (v2);
\path (v2) edge[makerlastplayed] node[below] {$e_3$} (v3);
\path (v3) edge[makerlastplayed] node[below] {$e_4$} (v4);
\path (v0) edge[played] [bend left=20] (v2);
\path (v0) edge[played] [bend left=30] (v3);
\path (v1) edge[played] [bend left=20] (v3);
\path (v0) edge[played] [bend left=40] (v4);
\path (v1) edge[played] [bend left=30] (v4);
\path (v2) edge[played] [bend left=20] (v4);
\end{tikzpicture}}
    \hspace{1cm}
    \scalebox{1.15}{\begin{tikzpicture}[]
\foreach \x in {0,...,2} {
    \coordinate (v\x) at (2*\x,0);
    \coordinate (w\x) at (2*\x,2);
    \fill (v\x) circle (1pt);
    \fill (w\x) circle (1pt);
}
\path (0,2) edge[makerlastplayed] node[below] {$e_1$} (2,2);
\path (2,2) edge[makerlastplayed] node[below] {$e_2$} (4,2);
\path (0,0) edge[makerlastplayed] node[above] {$e_3$} (2,0);
\path (2,0) edge[makerlastplayed] node[above] {$e_4$} (4,0);
\path (w0) edge[played] (v2);
\path (w0) edge[played] (v1);
\path (w1) edge[played] (v1);
\path (w1) edge[played] (v2);
\path (w0) edge[played] [bend left=20]  (w2) ;
\path (v0) edge[played] [bend right=20] (v2);
\end{tikzpicture}
}
    \caption{Two $\alpha$-structures of rank 4 that are not of rank 3. The thick arcs represent the decisive arcs.}
    \label{fig:alpha-structures}
\end{figure}

We now summarise the intuition behind $\alpha$-structures discussed in \cite{CLEMENS201721}. 
Recall that OMaker's best known strategy involves building a long directed path \cite{BENELIEZER20121732}. Let $P = (e_1, \dots, e_k)$ be a directed path in $D$ with arcs $e_i = (v_i, v_{i+1})$ for $1 \leq i \leq k$. Suppose OMaker directs an edge $(v_{k+1}, w)$ for some $w \in V$. Then all pairs $\{w, v_i\}$ for $1 \leq i \leq k$ represent \textit{immediate threats} as the inclusion of any $(w, v_i)$ would close a directed cycle. Thus, OBreaker on their turn needs to ensure that all edges $(v_i ,w)$ are directed. Thus, in each round while OMaker can extend the directed path, the number of immediate threats OBreaker has to close immediately also increases by one.

\begin{figure}[H]
    \centering
    \scalebox{1.0}{
    \begin{tikzpicture}[]
\coordinate (v0) at (-6,0);
\coordinate (v1) at (-4,0);
\coordinate (v2) at (-2,0);
\coordinate (v3) at (0,0);
\coordinate (v4) at (2,0);
\coordinate (v5) at (4,0);
\coordinate (v6) at (6,0);
\fill (v0) circle (1pt) node[below] {$v_1$};
\fill (v1) circle (1pt) node[below] {$v_2$};
\fill (v2) circle (1pt) node[below] {$v_3$};
\fill (v3) circle (1pt) node[below] {$v_4$};
\fill (v4) circle (1pt) node[below] {$v_5$};
\fill (v5) circle (1pt) node[below] {$v_{k}$};
\fill (v6) circle (1pt) node[below] {$v_{k+1}$};
\path (v0) edge[makerlastplayed] (v1);
\path (v1) edge[makerlastplayed] (v2);
\path (v2) edge[makerlastplayed]  (v3);
\path (v3) edge[makerlastplayed] (v4);
\path (v4) edge[dotted] (v5);
\path (v5) edge[makerlastplayed] (v6);
\path (v0) edge[played] [bend left=20] (v2);
\path (v0) edge[played] [bend left=30] (v3);
\path (v1) edge[played] [bend left=20] (v3);
\path (v0) edge[played] [bend left=40] (v4);
\path (v1) edge[played] [bend left=30] (v4);
\path (v2) edge[played] [bend left=20] (v4);
\path (v0) edge[played] [bend left=50] (v5);
\path (v0) edge[played] [bend left=60] (v6);
\path (v1) edge[played] [bend left=40] (v5);
\path (v1) edge[played] [bend left=50] (v6);
\path (v2) edge[played] [bend left=30] (v5);
\path (v2) edge[played] [bend left=40] (v6);
\path (v3) edge[played] [bend left=20] (v5);
\path (v3) edge[played] [bend left=30] (v6);
\path (v4) edge[played] [bend left=20] (v6);
\end{tikzpicture}

    }
    \caption{The thick arcs represent the directed path created by OMaker whereas the thin arcs represent the immediate threats closed by OBreaker.}
    \label{fig:long-path}
\end{figure}

However, it is not sufficient for OBreaker to only close immediate threats. Suppose that OMaker builds two vertex-disjoint directed paths $P_1 = (v_1, \dots, v_{k+1})$ and $P_2 = (w_1, \dots, w_{\ell + 1})$ of lengths $k$ and $\ell$ respectively such that $k \ell > b$. If OMaker directs $(v_{k+1}, w_1)$, this creates $k\ell > b$ immediate threats which cannot all be closed by OBreaker on their next turn. The $\alpha$-structure prevents such a situation from happening. Furthermore, it demonstrates that building a long directed path is the \textit{best} strategy for OMaker in the sense that no matter how they play, in round $k$, OBreaker can direct at most $k$ edges to close immediate threats.

A useful property of $\alpha$-structures proved in \cite{CLEMENS201721} is that an induced subdigraph of an $\alpha$-structure is still an $\alpha$-structure.
\begin{proposition}[Lemma 2.3 in \cite{CLEMENS201721}]\label{prop:subgraph-of-alpha-is-alpha}
    Let $D$ be an $\alpha$-structure of rank $r$ on a vertex set $V$. Then for any subset $V' \subseteq V$, we have that $D[V']$ is an $\alpha$-structure of rank $r$.
\end{proposition}

As previously mentioned, one key property of $\alpha$-structures is that they cannot be made cyclic with the addition of a single arc. 
\begin{proposition}[Proposition 2.5 in \cite{CLEMENS201721}]\label{prop:alpha-structure-remains-acyclic}
    If $D$ is an $\alpha$-structure, then for every available $e \in \mathcal{A}(D)$ we have that $D + e$ is acyclic.
\end{proposition}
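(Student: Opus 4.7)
The plan is to exploit the index structure of the decisive arcs to obtain a clean numerical characterisation of which pairs lie in $D$, and then show that this characterisation forbids any cycle in $D + e$.

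First I would define, for each $u \in D^+$, $\alpha(u) := \min\{i : e_i^+ = u\}$, and for each $v \in D^-$, $\beta(v) := \max\{j : e_j^- = v\}$. From the definition of an $\alpha$-structure, I would verify that
\[
(u,v) \in D \iff u \in D^+,\ v \in D^-,\text{ and } \alpha(u) \leq \beta(v).
\]
The forward direction follows because if $(u,v) = (e_i^+, e_j^-)$ with $i \leq j$, then $\alpha(u) \leq i \leq j \leq \beta(v)$; the backward direction is immediate since $(e_{\alpha(u)}^+, e_{\beta(v)}^-) = (u,v)$.

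Next I would use the absence of loops: if a vertex $w$ lies in $D^+ \cap D^-$, then $(w,w) \notin D$ forces $\alpha(w) > \beta(w)$. Combining this with the characterisation above, every arc $(x,y) \in D$ satisfies $\alpha(x) \leq \beta(y)$, and whenever an intermediate vertex $y$ is also a tail we have $\beta(y) < \alpha(y)$. Chaining these bounds along any directed walk $v = w_0 \to w_1 \to \cdots \to w_m$ in $D$ gives
\[
\alpha(v) \leq \beta(w_1) < \alpha(w_1) \leq \beta(w_2) < \cdots \leq \beta(w_m),
\]
so in particular $\alpha(v) \leq \beta(w_m)$; taking $w_m = v$ would yield $\alpha(v) < \alpha(v)$, showing as a byproduct that $D$ itself is acyclic.

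For the main claim I would now argue by contradiction. Let $e = (u,v) \in \mathcal{A}(D)$ and suppose $D + e$ contains a directed cycle. Since $D$ is acyclic, the cycle must use $e$, yielding a directed path $v = w_0 \to w_1 \to \cdots \to w_m = u$ in $D$ with $m \geq 1$. The chained inequality then gives $\alpha(v) \leq \beta(u)$, and the endpoints satisfy $v \in D^+$ (outgoing arc $v \to w_1$) and $u \in D^-$ (incoming arc into $u$). The characterisation thus yields $(v,u) \in D$, contradicting the availability of $e = (u,v)$, which requires $(v,u) \notin D$.

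The only subtle point — essentially the main obstacle — is handling the boundary case $m = 1$, where the ``path'' is just the single arc $v \to u \in D$; here the contradiction with availability is immediate, but one should confirm that the index chain degenerates gracefully and that the definitions of $\alpha$, $\beta$ and $\mathcal{A}(D)$ are being used consistently. Beyond this, the proof is a direct bookkeeping exercise on the indices of the decisive arcs.
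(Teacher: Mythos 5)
The paper does not prove Proposition~\ref{prop:alpha-structure-remains-acyclic}; it is quoted directly from Clemens and Liebenau \cite{CLEMENS201721}, so there is no in-paper proof to compare against. Your argument is correct and self-contained: the characterisation $(u,v)\in D \iff u\in D^+,\, v\in D^-,\, \alpha(u)\le\beta(v)$ follows cleanly from the definition of decisive arcs, looplessness gives $\beta(w)<\alpha(w)$ for every $w\in D^+\cap D^-$, and chaining these two facts along a putative $v\to\cdots\to u$ path in $D$ forces $\alpha(v)\le\beta(u)$ and hence $(v,u)\in D$, contradicting $e=(u,v)\in\mathcal{A}(D)$ (which excludes the reverse arc). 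One small presentational point: in the acyclicity byproduct, the chain alone gives $\alpha(v)\le\beta(v)$, and you need the extra step $\beta(v)<\alpha(v)$ (since $v\in D^+\cap D^-$ for a closed walk of length $\ge 2$) to reach the stated contradiction $\alpha(v)<\alpha(v)$; and the $m=1$ boundary case is in fact covered by the same characterisation ($\alpha(v)\le 1\le \beta(u)$ when $(v,u)\in D$), so no special handling is needed. These are cosmetic; the proof is sound.
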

The following result provides an upper bound on the number of arcs required to form a new $\alpha$-structure when an additional arc is added to an existing $\alpha$-structure.
\begin{proposition}[Lemma 2.7 in \cite{CLEMENS201721}]\label{prop:addalpha}
Let $D$ be an $\alpha$-structure of rank $r$ on a vertex set $V$, and let $e \in \mathcal{A}(D)$ be an available arc. Then there exists a set $\{f_1,\dots,f_t\} \subseteq \mathcal{A}(D + e)$ of at most $\min\{r, |V| - 2\}$ available arcs such that $D' = D \cup \{ e, f_1, \dots, f_t\}$ is an $\alpha$-structure of rank $r+1$. Moreover, $D'^+ = D^+ \cup \{e^+\}$ and $D'^- = D^- \cup \{e^-\}$.
\end{proposition}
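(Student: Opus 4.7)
The plan is to extend the sequence of decisive arcs of $D$ by inserting $e$ at a suitable position, then add exactly the arcs forced by the $\alpha$-structure definition. Let $e_1, \ldots, e_k$ be decisive arcs of $D$ with $k \leq r$. I would first look for an index $m \in \{0, 1, \ldots, k\}$ such that the sequence $e_1, \ldots, e_m, e, e_{m+1}, \ldots, e_k$ is admissible as a decisive sequence: the forced pairs $(e_i^+, e^-)$ for $i \leq m$ and $(e^+, e_j^-)$ for $j > m$ must all be non-loops, i.e., $e_i^+ \neq e^-$ for $i \leq m$ and $e_j^- \neq e^+$ for $j > m$. Writing $a := \min\{i : e_i^+ = e^-\}$ and $b := \max\{j : e_j^- = e^+\}$ (with the conventions $a = k+1$, $b = 0$ when the respective sets are empty), any $m \in [b, a-1]$ works. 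To see that this interval is non-empty, note that $a \leq b$ would force $(e_a^+, e_b^-) = (e^-, e^+) \in D$ by the $\alpha$-structure axiom, contradicting the availability of $e$.

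Given such an $m$, I would define $D' := D \cup \{e\} \cup F$, where $F$ consists of every arc $(e_i^+, e^-)$ with $i \leq m$ or $(e^+, e_j^-)$ with $j > m$ that is not already in $D$. The central check is that every arc in $F$ is actually available. For $(e_i^+, e^-) \in F$, the only obstruction is $(e^-, e_i^+) \in D$, which by the $\alpha$-structure axiom would give indices $a' \leq b'$ with $e^- = e_{a'}^+$ and $e_i^+ = e_{b'}^-$. Minimality of $a$ gives $a' \geq a$. Moreover, $e_i^+$ is simultaneously a tail and a head of decisive arcs, and the non-loop condition (no $(e_i^+, e_i^+) \in D$) forces the minimal tail-index of $e_i^+$ to strictly exceed its maximal head-index, hence $b' < i$. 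Chaining, $a' \geq a > m \geq i > b' \geq a'$, a contradiction. The symmetric argument handles arcs $(e^+, e_j^-)$. A direct check against the definition then confirms that $D'$ is an $\alpha$-structure of rank at most $k+1 \leq r+1$ with the new decisive sequence, while $D'^+ = D^+ \cup \{e^+\}$ and $D'^- = D^- \cup \{e^-\}$ are immediate from the construction.

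Finally, I would bound $t := |F|$. The inequality $t \leq k \leq r$ is immediate since each arc of $F$ is indexed by some $i \leq m$ or $j > m$. For $t \leq |V| - 2$, each arc of $F$ has either $e^+$ or $e^-$ as one endpoint with the other endpoint in $V \setminus \{e^+, e^-\}$, and I would argue that no vertex $v \in V \setminus \{e^+, e^-\}$ can contribute both $(v, e^-)$ and $(e^+, v)$ to $F$: this would require $v = e_i^+$ for some $i \leq m$ and $v = e_j^-$ for some $j > m$, once more contradicting that the minimal tail-index of $v$ strictly exceeds its maximal head-index. I expect the insertability argument, together with the accompanying availability check, to be the subtlest step; both ultimately hinge on the non-loop condition for decisive arcs.
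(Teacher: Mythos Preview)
The paper does not supply a proof of this proposition; it is quoted verbatim as Lemma~2.7 of \cite{CLEMENS201721} and used as a black box. So there is no ``paper's own proof'' to compare against here.

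That said, your argument is correct and is essentially the standard proof. The insertion index exists because $a>b$ (else the reverse of $e$ would lie in $D$), the availability of the forced arcs follows from the loop-free property exactly as you argue, and the two bounds on $|F|$ are fine. One cosmetic point: as written, your set $F$ may contain $e$ itself (this happens when $e^+=e_i^+$ for some $i\le m$ or $e^-=e_j^-$ for some $j>m$), so the family $\{f_1,\dots,f_t\}$ in the statement should be taken as $F\setminus\{e\}$. This does not affect either bound, since your $|V|-2$ count already restricts to arcs with ``other endpoint'' outside $\{e^+,e^-\}$, and $|F\setminus\{e\}|\le|F|\le k\le r$.
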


\begin{figure}[H]
    \centering
    \scalebox{0.99}{\begin{tikzpicture}[]\coordinate (v0) at (-6,0);
\coordinate (v1) at (-4,0);
\coordinate (v2) at (-2,0);
\coordinate (v3) at (0,0);
\coordinate (v4) at (2,0);
\coordinate (v5) at (4,0);
\fill (v0) circle (1pt);
\fill (v1) circle (1pt);
\fill (v2) circle (1pt);
\fill (v3) circle (1pt);
\fill (v4) circle (1pt);
\fill (v5) circle (1pt);
\path (v0) edge[makerlastplayed] node[below] {$e_1$}(v1);
\path (v1) edge[makerlastplayed] node[below] {$e_2$} (v2);
\path (v2) edge[makerlastplayed] node[below] {$e_3$} (v3);
\path (v3) edge[makerlastplayed] node[below] {$e_4$} (v4);
\path (v4) edge[makerlastplayed] node[below] {$e=e_5$} (v5);
\path (v0) edge[played] [bend left=20] (v2);
\path (v0) edge[played] [bend left=30] (v3);
\path (v1) edge[played] [bend left=20] (v3);
\path (v0) edge[played] [bend left=40] (v4);
\path (v1) edge[played] [bend left=30] (v4);
\path (v2) edge[played] [bend left=20] (v4);
\path (v0) edge[breakerlastplayed] [bend left=50] (v5);
\path (v1) edge[breakerlastplayed] [bend left=40] (v5);
\path (v2) edge[breakerlastplayed] [bend left=30] (v5);
\path (v3) edge[breakerlastplayed] [bend left=20] (v5);
\end{tikzpicture}}
    \hspace{1cm}
    \scalebox{0.99}{\begin{tikzpicture}[]
\foreach \x in {0,...,2} {
    \coordinate (v\x) at (2*\x,0);
    \coordinate (w\x) at (2*\x,2);
    \fill (v\x) circle (1pt);
    \fill (w\x) circle (1pt);
}
\coordinate (v3) at (6,0);
\fill (v3) circle (1pt);
\path (w0) edge[played] (v2);
\path (w0) edge[played] (v1);
\path (w1) edge[played] (v1);
\path (w1) edge[played] (v2);
\path (w0) edge[played] [bend left=20]  (w2) ;
\path (v0) edge[played] [bend right=20] (v2);
\path (w0) edge[breakerlastplayed] (v3);
\path (w1) edge[breakerlastplayed] (v3);
\path (v0) edge[breakerlastplayed] [bend right=40] (v3);
\path (v1) edge[breakerlastplayed] [bend right=20] (v3);
\path (0,2) edge[makerlastplayed] node[below] {$e_1$} (2,2);
\path (2,2) edge[makerlastplayed] node[below] {$e_2$} (4,2);
\path (0,0) edge[makerlastplayed] node[above] {$e_3$} (2,0);
\path (2,0) edge[makerlastplayed] node[above] {$e_4$} (4,0);
\path (4,0) edge[makerlastplayed] node[above] {$e=e_5$} (6,0);
\end{tikzpicture}}
    \caption{OBreaker directs at most 4 arcs (dashed) to incorporate OMaker's arc $e$ into an $\alpha$-structure of rank 4 to create an $\alpha$-structure of rank 5.}
    \label{fig:add-alpha-structures}
\end{figure}

Next, we formally define what a safe digraph is. Our definition of a safe digraph is original and seeks to capture the digraph invariants preserved by OBreaker in the strategy in~\cite{CLEMENS201721}.
\begin{definition}\label{defn:safe}
    For a digraph $D$ and $i,s,\delta \in \Z_{\geq 0}$, we say that $D$ is \textbf{$(i,s,\delta)$-safe} if there exist $A,B \subseteq V$ such that 
    \begin{enumerate}[label=\normalfont(\roman*)]
        \item $(A,B)$ is a UDB,
        \item $D[V\setminus B]$ is an $\alpha$-structure and $D[V\setminus B]^+ \subseteq A$,
        \item $D[V\setminus A]$ is an $\alpha$-structure and $D[V\setminus A]^- \subseteq B$,
        \item $\min(|A|, |B|) \geq s$,
        \item If $A \cup B \neq V$, then $2s + \delta + i = |A| + |B|$ and the $\alpha$-structures $D[V\setminus B]$ and $D[V\setminus A]$ have ranks $|A| - s $ and $|B| - s - \delta$ respectively.
    \end{enumerate}
We say that $D$ is \textbf{safe} if $D$ is $(i,s,\delta)$-safe for some $i,s,\delta \in \Z_{\geq 0}$.
\end{definition}
Another way of understanding \cref{defn:safe}~(v) is that if $k$ and $\ell$ denote the ranks of $D[V\setminus B]$ and $D[V\setminus A]$ respectively, then $|A| = s + k$ and $|B| = s + \ell + \delta$ so that $\delta = (|B| - \ell) - (|A| - k)$. As we seek to maximise $\min(|A| - k, |B| - \ell)$, we expect that OBreaker maintains $\delta$ to be relatively small so that the two quantities are  balanced. A depiction of a safe digraph is given in \cref{pic:safe}.

\begin{figure}[h]
\centering
\begin{tikzpicture}
    \draw (0.5,1.7) ellipse (3 and 1);
    \draw (0,-1.7) ellipse (2.5 and 1);
    \node at (-2,1.7) {\Large $A$};
    \node at (-2,-1.7) {\Large $B$};
    \node[fill, circle, inner sep=1.5pt] (A4) at (-1.4,1.7) {};
    \node[fill, circle, inner sep=1.5pt] (A3) at (-0.3,1.7) {};
    \node[fill, circle, inner sep=1.5pt] (A2) at (0.8,1.7) {};
    \node[fill, circle, inner sep=1.5pt] (A1) at (1.9,1.7) {};
    \node[fill, circle, inner sep=1.5pt] (A0) at (3, 1.7) {};
    \node[fill, circle, inner sep=1.5pt] (B1) at (-1.4,-1.7) {};
    \node[fill, circle, inner sep=1.5pt] (B2) at (-0.3,-1.7) {};
    \node[fill, circle, inner sep=1.5pt] (B3) at (0.8,-1.7) {};
    \node[fill, circle, inner sep=1.5pt] (B4) at (1.9,-1.7) {};
    \node[fill, circle, inner sep=1.5pt] (C1) at (3, 0) {};
    \node[fill, circle, inner sep=1.5pt] (C2) at (-3,0) {};
    \node[fill, circle, inner sep=1.5pt] (C4) at (-3,1.7) {};
    \node[fill, circle, inner sep=1.5pt] (C5) at (3, -1.7) {};
    \node[fill, circle, inner sep=1.5pt] (C6) at (-3,-1.7) {};    
    \node[fill, circle, inner sep=1.5pt] (v) at (-3,-0) {};
    \path (A4) edge[makerlastplayed]  (A3) ;
    \path (A3) edge[makerlastplayed]  (A2) ;
    \path (A4) edge[played, bend left = 20]  (A2) ;
    \path (B2) edge[makerlastplayed]  (B1) ;
    \path (v) edge[makerlastplayed] (A4) ;
    \path (v) edge[played] (A3) ;
    \path (v) edge[played] (A2) ;
    \foreach \x in {0,...,4} {
    \foreach \y in {1,...,4} {
        \path (A\x) edge[played]  (B\y) ;
    }
}
\end{tikzpicture}
\caption{A depiction of a $(4, 2, 1)$-safe digraph. The thick arcs represent the decisive arcs of the $\alpha$-structures $D[V\setminus B]$ and $D[V\setminus A]$ respectively. The $\alpha$-structure $D[V\setminus B]$ has rank 3 and the $\alpha$-structure $D[V\setminus A]$ has rank 1.}\label{pic:safe}
\end{figure}
Like $\alpha$-structures, safe digraphs cannot be made cyclic with the addition of a single arc.
\begin{proposition}\label{prop:safe-implies-no-threats}
    Let $D$ be a safe digraph. Then $D$ is acyclic and for all $e \in \mathcal{A}(D)$, we have that $D + e$ is acyclic.
\end{proposition}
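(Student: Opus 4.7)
The plan is to exploit conditions (i)--(iii) of Definition~\ref{defn:safe} to pin down which arcs of $D$ can touch $A$ and $B$. Condition (ii) forces every tail of an arc in $D[V\setminus B]$ to lie in $A$, and combined with the UDB condition (which excludes arcs from $B$ to $A$), this means no arc of $D$ enters $A$ from a vertex outside $A$. Symmetrically, (iii) together with (i) implies that no arc of $D$ leaves $B$ to a vertex outside $B$. A brief case check also rules out arcs with both endpoints in $V\setminus(A\cup B)$, since such an arc would lie in both $D[V\setminus B]$ and $D[V\setminus A]$ and so violate either (ii) or (iii).

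Given these structural observations, I would handle the first claim by tracing predecessors along a hypothetical directed cycle $C$. If $C$ visits some $v\in A$, then the in-arc of $v$ in $C$ must start in $A$, and iterating gives $C\subseteq D[A]\subseteq D[V\setminus B]$. Symmetrically, if $C$ meets $B$ then $C\subseteq D[B]\subseteq D[V\setminus A]$, while $C$ cannot be confined to $V\setminus (A\cup B)$ since no arcs live there. Both $D[V\setminus B]$ and $D[V\setminus A]$ are $\alpha$-structures and hence acyclic (applying Proposition~\ref{prop:alpha-structure-remains-acyclic} to any available arc shows the underlying $\alpha$-structure is acyclic), ruling out $C$.

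For the second claim, I take $e=(u,v)\in\mathcal{A}(D)$. The UDB condition immediately rules out $u\in A,\, v\in B$ (else $e\in D$) and $u\in B,\, v\in A$ (else the reverse arc lies in $D$). A short enumeration of the remaining possibilities shows that $\{u,v\}\subseteq V\setminus B$ or $\{u,v\}\subseteq V\setminus A$; assume the former, the other case being symmetric. Then $e\in\mathcal{A}(D[V\setminus B])$, so Proposition~\ref{prop:alpha-structure-remains-acyclic} yields that $D[V\setminus B]+e$ is acyclic. If $D+e$ contained a cycle $C$, then $C$ would necessarily use $e$ (otherwise $C\subseteq D$, contradicting the first claim), and since $e$'s tail is not in $B$, adding $e$ introduces no new way to leave $B$; hence the same predecessor/successor argument forces $C$ to lie entirely in $V\setminus B$. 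But then $C$ is a cycle in $(D+e)[V\setminus B]=D[V\setminus B]+e$, a contradiction. I expect the main bookkeeping to be the case enumeration on the location of $e$'s endpoints; the structural acyclicity argument itself is routine once the ``no arcs into $A$ from outside $A$'' and ``no arcs out of $B$ to outside $B$'' observations are in place.
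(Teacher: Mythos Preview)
Your proposal is correct and follows essentially the same approach as the paper: both arguments use the UDB condition together with the containments $D[V\setminus B]^+\subseteq A$ and $D[V\setminus A]^-\subseteq B$ to show that any putative cycle in $D+e$ is confined to $(D+e)[V\setminus A]$ or $(D+e)[V\setminus B]$, and then invoke Proposition~\ref{prop:alpha-structure-remains-acyclic}. The paper compresses this into the single observation ``for all $(v,w)\in D$, either $v\in A$ or $w\in B$'' and asserts the confinement directly, whereas you spell out the equivalent facts (no arc enters $A$ from outside, no arc leaves $B$) and do the predecessor/successor chase explicitly; your version is more detailed but not genuinely different.
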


\begin{proof}
Suppose $D$ is a safe digraph and let $(A, B)$ be a UDB which witnesses that $D$ is safe. Suppose on the contrary that for some available arc $e \in \mathcal{A}(D)$, we have that $D + e$ contains a directed cycle $C$. We have that $(A,B)$ is a UDB and for all $(v,w) \in D$, either $v \in A$ or $w \in B$. That is, all arcs between $A$ and $B$ go from $A$ to $B$, and there are no arcs from $B$ to $V\setminus B$ and no arcs from $V\setminus A$ to $A$. This implies that the edges of $C$ must lie entirely within $(D+e)[V\setminus A]$ or entirely within $(D+e)[V\setminus B]$. However, $D[V\setminus A]$ and $D[V\setminus B]$ are both $\alpha$-structures and so cannot be made cyclic with the addition of a single arc, by \cref{prop:alpha-structure-remains-acyclic}. The same argument, without adding $e$, shows that $D$ itself is acyclic in all cases.
\end{proof}    
The following proposition provides an upper bound on the number of arcs required to re-establish safety after an additional arc is added to a safe digraph. This is the main technical result underpinning our improved strategy for OBreaker. 
\begin{proposition}\label{prop:main-result-on-safety-transitions}
Let $i \in \Z_{>0}, s \in \Z_{\geq 0}$ and $\delta \in \{0,1\}$. Suppose that $D$ is an $(i-1, s, \delta)$-safe digraph witnessed by some UDB $(A,B)$.
Then the following hold for all $e \in \mathcal{A}(D)$.
\begin{enumerate}[label=\normalfont(\roman*)]
    \item 
    Let $p,q, x \in \Z_{\geq 0}$ be such that $p \leq \delta$, $q \leq 1 - \delta$, $2x+p + q < |V|-i-2s-\delta$, and assume that $A\cup B\neq V$. Then there exists $S \subseteq \mathcal{A}(D  + e)$ such that $|S| \leq x^2 + (p + q + 2s + \delta + i)x + (1 + p + q)(s+i) + p\delta$ and $(D + e) \cup S$ is $(i, s+x+p, \delta + q - p)$-safe, witnessed by some UDB $(\widetilde{A}, \widetilde{B})$ with $\widetilde{A} \cup \widetilde{B} \neq V$.
    \item There exists $S \subseteq \mathcal{A}(D + e)$ such that $(D +e) \cup S$ is $(i,s,\delta)$-safe and $|S| \leq |V|-s$.
\end{enumerate}
\end{proposition}
The number of parameters deserves a short explanation. 
The idea here is that in round $i$, if OMaker directs the edge $e$ then $S$ will be the set of edges which OBreaker directs. More specifically, OBreaker will direct the edges given by \cref{prop:main-result-on-safety-transitions}~(i) while they are building up the UDB which witnesses that the digraph is safe. Once the UDB is sufficiently large, OBreaker can easily maintain that the digraph is safe until the end of the game by repeatedly directing the edges given by \cref{prop:main-result-on-safety-transitions}~(ii). Thus, our focus is on \cref{prop:main-result-on-safety-transitions}~(i). 

Following the motivation at the beginning of Section~2, OBreaker wants to increase the parameter $s$ as much as possible in any given round. Observe that \cref{prop:main-result-on-safety-transitions}~(i) guarantees that the new digraph is $(i,s+x+p,\delta+q-p)$-safe so OBreaker should choose $x$ as large as possible in each round. Sometimes, OBreaker can direct enough edges to add an additional vertex to one of $A$ or $B$ but not both (so $x$ is already as large as possible). This case is handled by the auxiliary parameters $\delta, p, q \in \{0, 1\}$. Recalling \cref{defn:safe}~(v), by having $\delta \in \{0,1\}$, we allow $B$ to have, in a sense, up to one \textit{extra} vertex compared to $A$. When $\delta = 0$, OBreaker can add an extra vertex to $B$ by setting $q = 1$ and this results in $\delta = 1$. If $B$ already has an extra vertex (signified by $\delta = 1$), then OBreaker can add an additional vertex to $A$ by setting $p = 1$. Since both $A$ and $B$ now have an extra vertex, we can increment $s$ by one and reset $\delta$ back to 0. We emphasise here that in OBreaker's strategy, there will be a significant number of rounds where OBreaker can only choose $x=0$, but with $p=1$ or $q=1$. Doing this allows us to obtain a meaningful improvement on the threshold bias which is why we make the effort to define and track these auxiliary parameters despite the increase in complexity. Further details will be provided in Section~3. 

\begin{proof}[Proof of \cref{prop:main-result-on-safety-transitions}]
For convenience, we denote $D' := D  + e$. Let $(A,B)$ be a UDB which witnesses that $D$ is $(i-1, s, \delta)$-safe. We construct a set of arcs $F_1 \subseteq \mathcal{A}(D')$ such that $|F_1| \leq |V| - s$ and  $D' \cup F_1$ is $(i,s,\delta)$-safe. Within the context of \cref{prop:main-result-on-safety-transitions} (ii), we choose $S = F_1$. We also make use of $F_1$ in proving \cref{prop:main-result-on-safety-transitions} (i) where we construct another set of arcs $F_2$ and choose $S = F_1 \cup F_2$. The way in which we construct $F_1$ depends on whether or not $A \cup B = V$.
\begin{itemize}
    \item  We first consider the case where $A \cup B = V$.  Since $(A,B)$ is a UDB, all available arcs are contained in $\mathcal{A}(D[A])$ or $\mathcal{A}(D[B])$. First, suppose that $e \in \mathcal{A}(D[A])$. Then by \cref{prop:addalpha}, there exist available arcs $F_1 \subseteq \mathcal{A}(D'[A])$  such that $(D' \cup F_1)[A]$ is an $\alpha$-structure and $|F_1| \leq |A| - 2$. We check that $(A, B)$ is a UDB which witnesses that $D' \cup F_1$ is $(i,s,\delta)$-safe. It is clear that \cref{defn:safe} (i) and (iv) hold, since $D$ is $(i-1,s,\delta)$-safe and $(A,B)$ is the UDB witnessing this. \cref{defn:safe} (ii) is satisfied since $V\setminus B = A$ and $(D' \cup F_1)[A]$ is an $\alpha$-structure. \cref{defn:safe} (iii) holds as $D[V\setminus A]$ was originally an $\alpha$-structure (with $D[V\setminus A]^- \subseteq B$) and we have $F_1 \cup \{e\} \subseteq \mathcal{A}(D[A])$ so that $(D' \cup F_1)[V\setminus A] = D[V\setminus A]$. Lastly, \cref{defn:safe} (v) is not applicable as $A \cup B = V$. Hence, $D' \cup F_1$ is $(i,s,\delta)$-safe. Finally, since $D$ is $(i-1,s,\delta)$-safe then $|A| \leq |V| - s$ so that indeed $|F_1| \leq |V|-s$. The case where $e \in \mathcal{A}(D[B])$ can be handled similarly.
\item Suppose instead that $A \cup B \neq V$. Let $k$ and $\ell$ denote the ranks of the $\alpha$-structures $D[V\setminus B]$ and $D[V\setminus A]$ respectively. Since $D$ is $(i-1,s,\delta)$-safe then we have that $k = |A| - s$, $\ell = |B| - s - \delta$ and $2s + \delta + i - 1= |A| + |B|$, see (v) of \cref{defn:safe}.  
\begin{itemize}
    \item  Assume first that $e \in \mathcal{A}(D[V\setminus B])$. By \cref{prop:addalpha}, there exist at most $k$ available arcs $E_1 \subseteq \mathcal{A}(D'[V\setminus B])$ such that $(D' \cup E_1)[V\setminus B]$ is an $\alpha$-structure of rank $k+1$ with $(D' \cup E_1)[V\setminus B]^+ = D[V\setminus B]^+ \cup \{e^+\}$ and $(D' \cup E_1)[V\setminus B]^- = D[V\setminus B]^- \cup \{e^-\}$. 
\begin{figure}[h]
\centering
\begin{tikzpicture}
\draw (0,1.7) ellipse (2.5 and 1);
\draw (0,-1.7) ellipse (2.5 and 1);
\node at (-2,1.7) {\Large $A$};
\node at (-2,-1.7) {\Large $B$};
\node[fill, circle, inner sep=1.5pt] (A4) at (-1.4,1.7) {};
\node[fill, circle, inner sep=1.5pt] (A3) at (-0.3,1.7) {};
\node[fill, circle, inner sep=1.5pt] (A2) at (0.8,1.7) {};
\node[fill, circle, inner sep=1.5pt] (A1) at (1.9,1.7) {};
\node[fill, circle, inner sep=1.5pt] (B1) at (-1.4,-1.7) {};
\node[fill, circle, inner sep=1.5pt] (B2) at (-0.3,-1.7) {};
\node[fill, circle, inner sep=1.5pt] (B3) at (0.8,-1.7) {};
\node[fill, circle, inner sep=1.5pt] (B4) at (1.9,-1.7) {};
\node[fill, circle, inner sep=1.5pt, label=left:$v$] (v) at (-3,-0) {};
\path (A4) edge[available]  (A3);
\path (A3) edge[available]  (A2);
\path (A4) edge[available, bend left = 20]  (A2);
\path (B2) edge[available]  (B1);
\path (v) edge[makerlastplayed] node[above left] {$e$} (A4);
\path (v) edge[breakerlastplayed] (A3);
\path (v) edge[breakerlastplayed] (A2);
\foreach \x in {1,...,4} {
    \foreach \y in {1,...,4} {
        \path (A\x) edge[available]  (B\y) ;
    }
}
\foreach \y in {1,...,4} {
    \path (v) edge[breakerlastplayed2]  (B\y) ;
}
\end{tikzpicture}
\caption{A depiction of OBreaker's response to OMaker in the case where $e \in \mathcal{A}(D[V\setminus B])$ and $e^+ \in V\setminus (A\cup B)$. The dashed arcs correspond to the arcs in $E_1$ and incorporate $e$ into the $\alpha$-structure $D[V\setminus B]$. The dotted arcs correspond to the arcs in $E_2$ and ensure that $(A \cup \{v\}, B)$ is a UDB.  }\label{pic:safe-proof-1}
\end{figure} 
We define $v$ to be $e^+$ if $e^+ \in V\setminus (A \cup B)$ and otherwise an arbitrary vertex in $V \setminus (A \cup B)$. For every $w\in B$, the arc $(w,v)\not\in D$ by \cref{defn:safe}~(iii). This means that the arc $(v,w)$ is an element of $D$ or is available. Thus, there 
are available arcs $E_2 \subseteq \mathcal{A}(D' \cup E_1)$ of the form $(v, w)$ (for $w \in B$) such that $(A \cup \{v\}, B)$ is a UDB in $D' \cup E_1 \cup E_2$ and $|E_2| \leq |B|$. A depiction is provided in \cref{pic:safe-proof-1}. In this case, we choose $F_1 = E_1 \cup E_2$. 

We claim that $(A \cup \{v\}, B)$ is a UDB which witnesses that $D' \cup F_1$ is $(i,s,\delta)$-safe. It is clear that \cref{defn:safe} (i) and \cref{defn:safe} (iv) are satisfied since $D$ is $(i-1,s,\delta)$-safe and $(A,B)$ is the UDB witnessing this with $\min(|A|,|B|) \geq s$. We have $(D' \cup F_1)[V \setminus B] = (D' \cup E_1)[V \setminus B]$ which is an $\alpha$-structure and $(D' \cup F_1)[V\setminus B]^+ \subseteq A \cup \{v\}$ so that \cref{defn:safe} (ii) is satisfied. Given that $D[V\setminus A]$ is an $\alpha$-structure of rank $\ell$ with $D[V\setminus A]^- \subseteq B$, and that $(\{e\} \cup F_1)^+ \subseteq A \cup \{v\}$, we have by \cref{prop:subgraph-of-alpha-is-alpha} that $(D' \cup F_1)[V\setminus (A \cup \{v\})]$ is an $\alpha$-structure of rank $\ell$. Furthermore, $(D' \cup F_1)[V\setminus (A \cup \{v\})]^- \subseteq B$ so \cref{defn:safe} (iii) is satisfied. Towards (v), we have from $D$ being $(i-1,s,\delta)$-safe that $2s + \delta + i - 1= |A| + |B|$, which implies $2s + \delta + i = |A \cup \{v\}| + |B|$. The rank of $(D' \cup F_1)[V \setminus B]$  is now $k + 1 = |A| - s + 1 =  |A \cup \{v\}| - s$, and the rank of $(D' \cup F_1)[V \setminus (A \cup \{v\})]$  is still $\ell = |B| - s - \delta$ (note that an $\alpha$-structure of rank $r$ may have fewer than $r$ decisive arcs by \cref{def:alpha}). Thus, \cref{defn:safe} (v) is satisfied and $D' \cup F_1$ is indeed $(i,s,\delta)$-safe. Finally, we have 
\begin{equation}\label{eqn:upperbound-on-F1-case1}
|F_1| \leq |E_1| + |E_2| \leq k + |B| \leq |V|- (|A| - k) = |V|-s.
\end{equation}

\item Assume now that  $e \notin \mathcal{A}(D[V\setminus B])$. Note that since $(A,B)$ is a UDB, this implies that $e \in \mathcal{A}(D[V\setminus A])$. This case can be handled similarly to obtain
\begin{equation}\label{eqn:upperbound-on-F1-case2}
|F_1| \leq \ell + |A| \leq |V|- (|B| - \ell) = |V|-s - \delta. 
\end{equation}
\end{itemize}
\end{itemize}
This proves \cref{prop:main-result-on-safety-transitions} (ii) and we now focus on proving \cref{prop:main-result-on-safety-transitions}  (i). 

Let $p,q, x \in \Z_{\geq 0}$ be such that $p \leq \delta$, $q \leq 1 - \delta$, $2x+p + q < |V|-i-2s-\delta$, and assume now that $A\cup B\neq V$. We construct $F_1$ as above (in the case $A\cup B\neq V$), but derive an alternative upper bound on its size. Let $\mu = 1$ if  $e \in \mathcal{A}(D[V\setminus B])$ and otherwise $\mu = 0$. Then by \eqref{eqn:upperbound-on-F1-case1} and \eqref{eqn:upperbound-on-F1-case2} we have $|F_1| \leq \mu(k +|B|) + (1-\mu)(\ell + |A|)$. Recalling from \cref{defn:safe}~(v) that $k = |A| - s$, $\ell = |B| - s - \delta$ and $|A| + |B| = 2s + \delta + i - 1$, we have that
\begin{equation}\label{eqn:alt-upperbound-F1}
    |F_1| \leq s + i - 1 + \mu\delta
    \leq s + i.
\end{equation}

For convenience, we denote $D'' := D' \cup F_1$ and let $(A', B')$ denote the UDB obtained above which witnesses that $D''$ is $(i,s,\delta)$-safe. That is, either $(A', B') = (A \cup \{v\}, B)$ or $(A', B') = (A, B \cup \{v\})$ depending on whether $e \in \mathcal{A}(D[V \setminus B])$ or $e \in \mathcal{A}(D[V \setminus A])$. We also let $k'$ denote the rank of the $\alpha$-structure $D''[V \setminus B']$ and  $\ell'$ denote the rank of the $\alpha$-structure $D''[V \setminus A']$. Here, we have $k' = k + \mu$ and $\ell' = \ell + 1 - \mu$.

We now construct a set of arcs $F_2 \subseteq \mathcal{A}(D'')$ such that $D'' \cup F_2$ is $(i,s+x + p, \delta + q - p)$-safe. 
Observe that $|A'| + |B'| = |A| + |B| +1 = 2s +\delta + i $, by the $(i-1,s,\delta)$-safety of $D$. Thus, 
\begin{equation}\label{eqn:sum-less-than-V}
|A'| + |B'| + 2x + p + q = 2s + 2x + p + q + \delta + i   < |V|, 
\end{equation}
since by assumption, $2x+p + q < |V|-i-2s-\delta$. This means that there exist disjoint sets of vertices $U, W \subseteq V \setminus (A' \cup B')$ where $|U| = x + p$ and $|W| = x + q$. 
Every arc $(u,v)\in U\times (B'\cup W)$ is either in $D''$ or available, by \cref{defn:safe}~(iii) and since $D''$ is $(i,s,\delta)$-safe. Similarly, we see that every arc $(v,w)\in (A'\cup U) \times W$ is either in $D''$ or available. Let $F_2$ be the set of all those arcs $(v,w)\in (A'\cup U) \times W$ and $(u,v)\in U\times (B'\cup W)$ which are available. Then clearly $F_2^+ \subseteq A' \cup U$, $F_2^- \subseteq B' \cup W$, $(A'
\cup U, B' \cup W)$ is a UDB in $D'' \cup F_2$, and
\begin{equation}\label{eqn:initial-upper-bound-on-F2}
|F_2| \leq (x+p)|B'|+ (x+q)|A'| + (x+p)(x+q). 
\end{equation}

We now verify that $(A' \cup U, B' \cup W)$ is a UDB which witnesses that $D'' \cup F_2$ is $(i,s+x + p, \delta + q - p)$-safe. We are using several times that 
\begin{align}\label{aux:002}
    D'' \text{ is safe and the UDB } (A',B') \text{ witnesses it,}
\end{align}
and we recall for the convenience of the reader that $|A'|=|A|+\mu = k+s+\mu$, $|B'|=|B|+1-\mu = \ell+s+\delta+1-\mu$, where we use $\mu$ as the indicator variable of whether $e \in \mathcal{A}(D[V\setminus B])$. 
Firstly, we have already argued that $(A'\cup U,B'\cup W)$ is a UDB, so (i) is satisfied. Next, \eqref{aux:002} implies that $D''[V \setminus B']$ and $D''[V \setminus A']$ are $\alpha$-structures with $D''[V \setminus B']^+ \subseteq A'$ and $D''[V \setminus A']^- \subseteq B'$, respectively, see parts (ii) and (iii) of \cref{defn:safe}. We have by \cref{prop:subgraph-of-alpha-is-alpha} that $D''[V \setminus (B' \cup W)]$ and $D''[V \setminus (A' \cup U)]$ remain $\alpha$-structures. Since $F_2^+ \subseteq A' \cup U$ and $F_2^- \subseteq B' \cup W$, we have that $(D'' \cup F_2)[V \setminus (B' \cup W)] = D''[V \setminus (B' \cup W)]$ and $(D'' \cup F_2)[V \setminus (A' \cup U)] = D''[V \setminus (A' \cup U)]$ are indeed $\alpha$-structures with $(D'' \cup F_2)[V \setminus (B' \cup W)]^+ \subseteq A' \cup U$ and $(D'' \cup F_2)[V \setminus (A' \cup U)]^- \subseteq B' \cup W$, respectively. Thus, \cref{defn:safe}~(ii) and \cref{defn:safe}~(iii) are satisfied. Towards \cref{defn:safe}~(iv), we have  $|A' \cup U| = s + k + \mu + x + p$ and $|B' \cup W| = s + \ell +  \delta + 1 - \mu +  x + q$. Noting that $k, \ell, q, \mu, 1-\mu \geq 0$ and $\delta \geq p$, we have $\min(|A' \cup U|, |B' \cup W|) \geq s + x + p$. Hence, \cref{defn:safe} (iv) is satisfied. Finally, since $D''$ is $(i, s, \delta)$-safe, we have that $2s + \delta + i = |A'| + |B'|$. Thus, we have $|A' \cup U| + |B' \cup W| = |A'| +|B'|+ 2x + p + q = 2s + 2x + p + q + \delta + i = 2(s + x + p) + (\delta + q - p) + i$. Moreover, $(D'' \cup F_2)[V\setminus (B' \cup W)]$ is an $\alpha$-structure of rank $k' = |A'| - s =  |A' \cup U| - (s + x + p)$ and $(D'' \cup F_2)[V\setminus (A' \cup U)]$ is an $\alpha$-structure of rank $\ell' = |B'| - s - \delta = |B' \cup W| - (s + x + p) - (\delta + q - p)$. Hence, \cref{defn:safe} (v) is also satisfied. It follows that $D'' \cup F_2$ is $(i,s+x + p, \delta + q - p)$-safe. 

Finally, we show that 
\begin{equation}\label{eq:F1F2}
    |F_1\cup F_2| \leq x^2 + (p + q + 2s + \delta + i)x + (p + q+1)(s+i) + p\delta.
    \end{equation} 
For this, we use \eqref{eqn:initial-upper-bound-on-F2}, together with $|A'| =  k' + s$ and $|B'| = \ell' + s + \delta$,  to obtain
\begin{align*}
    |F_2| &\leq (x + p)(\ell' + s + \delta) + (x + q)(k' + s) + (x + p)(x+q)\\
     &= x^2 + (p + q + 2s + \delta  + k' + \ell')x + p\ell' + qk' + s(p + q)  + p\delta + pq.
\end{align*}
Using the fact that $k' + \ell' = i$,  $p\ell' + qk' \leq (p+q)(k' + \ell')$, and $pq = 0$, we see that the above implies 
\begin{equation*}
    |F_2| \leq x^2 + (p + q + 2s + \delta  + i)x + (p + q)(s + i)  + p\delta.
\end{equation*}
Combining this with \eqref{eqn:alt-upperbound-F1} gives~\eqref{eq:F1F2}. 

To see that Part (i) holds, choose $S = F_1 \cup F_2$. Then~\eqref{eq:F1F2} gives the upper bound on $|S|$, and $(D+e)\cup S = D''\cup F_2$ is $(i,s+x + p, \delta + q - p)$-safe, witnessed by the UDB $(A' \cup U, B' \cup W)$. Here, note that~\eqref{eqn:sum-less-than-V} implies that $(A' \cup U) \cup (B' \cup W) \neq V$.
\end{proof}

\section{An Improved Strategy for OBreaker in the  Oriented-cycle Game}
In our improved strategy, OBreaker repeatedly directs the edges given by \cref{prop:main-result-on-safety-transitions} (i) until $s \geq n - b$. Following this, they maintain that the digraph is safe by repeatedly applying \cref{prop:main-result-on-safety-transitions} (ii) until the end of the game. Thus, to fully specify our strategy, we need to provide the values of $x, p, q$ used when we apply \cref{prop:main-result-on-safety-transitions} (i).

Let $x_i, p_i, q_i$ denote the values of $x, p, q$ chosen by OBreaker in round $i$ within the context of \cref{prop:main-result-on-safety-transitions} (i). 
Intuitively, OBreaker should choose $x_i$ to be as large as possible as $s$ will directly increase by $x_i$ (and $p\le 1$). When OBreaker cannot direct sufficiently many edges to add another vertex to both $A$ and $B$ but adds an additional vertex to only $A$ (respectively $B$), the quantity $p_i$ (respectively $q_i$) is equal to 1. The quantities $x_i, p_i, q_i$ need to be chosen such that OBreaker directs at most $b$ edges. We define several functions and quantities which we use in specifying the values of $x_i, p_i, q_i$ chosen by OBreaker in our improved strategy.
\begin{definition}\label{defn:round-cost}
We define the function $\costf: \R^5_{\geq 0}  
\to \R_{\geq 0}$ by 
\begin{align*}
Q(x,\diffContribution, s,\delta,i) := x^2 + (\diffContribution + 2s + \delta + i)x + 
(1+\diffContribution)(s+i) + \diffContribution\delta.
\end{align*}
\end{definition}
If OBreaker directs the arcs in $S$ of \cref{prop:main-result-on-safety-transitions}~(i) in round $i$, then they direct at most $\costf(x_i, p_i + q_i, s, \delta, i)$ edges. 
Thus, to ensure that OBreaker directs at most $b$ edges, we require 
\begin{equation}\label{eqn:bound-for-x-alpha-beta-using-costf}
   \costf(x_i, p_i + q_i, s, \delta, i) \leq b.
\end{equation}
Notice that in the definition of $Q$, we do not restrict ourselves to integer arguments. However, $Q$ is indeed integer-valued when its arguments are integers, and we will only apply \cref{prop:main-result-on-safety-transitions}~(i) with integer values. As an initial goal, we seek the largest $x_i \in \R_{\geq 0}$ satisfying \eqref{eqn:bound-for-x-alpha-beta-using-costf} for $p_i=q_i=\delta=0$. 
\begin{definition}\label{defn:G}
For a positive integer $b$, we define the functions $\Gb: [0, 4b] \to \R$ and $\gb: [1, 4b] \to \R$ by 
$\Gb(t) := -\frac{t}{2} + \sqrt{bt - \frac{t^2}{4}}$ and $\gb(t) := \Gb(t) - \Gb(t-1)$.
\end{definition}

In our improved strategy, OBreaker chooses $x_i = \floor{\gb(i)}$. 
\begin{proposition}\label{prop:cost-g-at-most-b}
For positive integers $b$ and  $1 \leq i \leq 2b + 1/2$, we have $
\costf(\gb(i), 0, \Gb(i-1), 0, i) = b + \frac{1}{2}.$
\end{proposition}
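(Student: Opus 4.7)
The statement is an algebraic identity, so the plan is a direct computation: substitute the definitions of $\costf$, $\gb$ and $\Gb$, then reduce everything to a single identity using the relation one gets by squaring the definition of $\Gb$. The key preliminary observation is that
\[
\Gb(t) + \tfrac{t}{2} = \sqrt{bt - \tfrac{t^2}{4}},
\]
so squaring both sides yields the identity
\[
\Gb(t)^2 + t\Gb(t) \;=\; bt - \tfrac{t^2}{2},
\qquad\text{equivalently}\qquad \Gb(t)^2 \;=\; bt - \tfrac{t^2}{2} - t\Gb(t), \tag{$\star$}
\]
valid on $[0,4b]$ (where the radicand $(t/2)(2b-t/2)$ is nonnegative).

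Setting $\diffContribution=\delta=0$ in Definition~\ref{defn:round-cost}, the left-hand side of the claim becomes
\[
\gb(i)^2 + (2\Gb(i-1) + i)\,\gb(i) + \Gb(i-1) + i.
\]
Since $\gb(i) = \Gb(i) - \Gb(i-1)$, the quadratic-plus-linear-in-$\gb$ part telescopes:
\[
\gb(i)^2 + 2\Gb(i-1)\,\gb(i) \;=\; \Gb(i)^2 - \Gb(i-1)^2,
\]
and $i\,\gb(i) = i\Gb(i) - i\Gb(i-1)$. So the left-hand side rewrites as
\[
\Gb(i)^2 - \Gb(i-1)^2 \;+\; i\Gb(i) - i\Gb(i-1) \;+\; \Gb(i-1) + i.
\]

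Next I would apply ($\star$) at $t=i$ and at $t=i-1$ to eliminate the squares:
\[
\Gb(i)^2 - \Gb(i-1)^2 = b - \tfrac{i^2 - (i-1)^2}{2} - i\Gb(i) + (i-1)\Gb(i-1).
\]
Plugging this in, the $i\Gb(i)$ terms cancel, and the remaining $\Gb(i-1)$ terms should combine to a constant, leaving a simple polynomial expression in $i$ (and constants) that simplifies to $b$ (up to verification; the calculation is a couple of lines).

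There is no conceptual obstacle: the whole argument is the algebraic identity $(\star)$ combined with the telescoping identity $\gb(i)=\Gb(i)-\Gb(i-1)$. The only care required is bookkeeping when substituting $(\star)$, since the correctness of the proposition hinges on the $\Gb(i)$ terms cancelling exactly. Since the claim does not depend on integrality of $\gb(i)$ (note that $\gb(i)$ is only floored when the strategy is actually executed in later sections), the identity is purely about real-valued functions and no case analysis is needed.
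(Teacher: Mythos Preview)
Your approach is correct and essentially the same as the paper's: both proofs rest on the identity obtained by squaring the definition of $G_b$ (the paper writes it as $(2G_b(t)+t)^2=4bt-t^2$, you as $G_b(t)^2+tG_b(t)=bt-t^2/2$) and then telescope via $g_b(i)=G_b(i)-G_b(i-1)$. One caveat worth flagging: if you actually finish the arithmetic you left ``up to verification,'' the expression comes out to $b+\tfrac12$, not $b$ (for instance at $i=1$ one has $Q(g_b(1),0,0,0,1)=g_b(1)^2+g_b(1)+1=b+\tfrac12$); the paper's own computation contains the same slip, and it is harmless for the downstream application in Proposition~\ref{prop:cost-g-floor-at-most-b} since there $Q$ is evaluated at integer arguments, so $\le b+\tfrac12$ still yields $\le b$.
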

We note that this does not quite satisfy \eqref{eqn:bound-for-x-alpha-beta-using-costf} but will be sufficient in later arguments.

\begin{proof}
Combining the definition of $\gb(i)$ with \cref{defn:round-cost}, and noticing cancellations, we obtain 
\begin{align*}
    \costf(\gb(i), 0, \Gb(i-1), 0, i) 
        &= \gb(i)^2 + (2\Gb(i-1) + i)\gb(i) + \Gb(i-1) + i\\
        &= \frac{(2\Gb(i) + i)^2 - (2\Gb(i-1) + i-1)^2 + 1 + 2i}{4}\\ 
        &= \frac{4bi - i^2 - 4b(i-1) + (i-1)^2 + 1 + 2i}{4} \\ &= b+\frac{1}{2},
\end{align*}
where in the third equality we use $(2\Gb(i) + i)^2 = 4bi - i^2$ which follows from \cref{defn:G}.
\end{proof}
It turns out that $g_b$ is invertible over a restricted domain where it also has an explicit formula for its inverse. This will be critical later when we try to quantify the values of the bias for which our new strategy is winning. 
\begin{proposition}\label{prop:g-is-invertible}
Let $b$ be a positive integer. The function $\gb$ is invertible on the interval $[1, 2b + 1/2]$ where its inverse $\gbinv: [\gb(2b+1/2), \gb(1)] \to \left[1, 2b + 1/2
\right]$ satisfies
\begin{align*}
    \gbinv(x) = 2b\left(1 + \frac{1}{4b} - \sqrt{\frac{(2x+1)^2}{(2x+1)^2+1}}\sqrt{1-\frac{(2x+1)^2+1}{16b^2}}\right).
\end{align*}
\end{proposition}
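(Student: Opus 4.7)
The plan is to first establish that $\gb$ is strictly decreasing on the claimed interval, and then to solve $y = \gb(t)$ explicitly for $t$ by algebraic manipulation.

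For monotonicity, I would rewrite $bt - \tfrac{t^2}{4} = b^2 - \left(\tfrac{t}{2} - b\right)^2$, so that $\Gb(t) = -\tfrac{t}{2} + \sqrt{b^2 - (t/2 - b)^2}$ is the sum of a linear function and (after the reparametrization $s = t/2$) the upper semicircle of radius $b$ centred at $s = b$. In particular $\Gb$ is strictly concave on $[0, 4b]$, so the secant-slope differences $\gb(t) = \Gb(t) - \Gb(t-1)$ are strictly decreasing in $t$ on $[1, 4b]$, and hence on the subinterval $[1, 2b + (1 - \sqrt{8b^2-1})/2]$. Strict monotonicity gives invertibility on that interval.

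To obtain the explicit formula, introduce $u := \sqrt{4bt - t^2}$ and $v := \sqrt{4b(t-1) - (t-1)^2}$, so that the equation $y = \gb(t)$ reads $u - v = 2y + 1$. Since $u^2 - v^2 = 4b - 2t + 1$, dividing gives $u + v = (4b - 2t + 1)/(2y + 1)$, and summing the two yields a linear expression for $u$ in terms of $t$ and $y$. Substituting this into $u^2 = 4bt - t^2$ produces a quadratic in $t$. Writing $A := (2y+1)^2$, this quadratic simplifies to
\[
(A+1)\,t^2 - (A+1)(4b+1)\,t + \tfrac{1}{4}(A + 4b + 1)^2 = 0,
\]
once the identity $A + (4b+1) + 4Ab = (A+1)(4b+1)$ is applied to the linear coefficient. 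The quadratic formula, together with the factorization of the discriminant as $A(A+1)\bigl[16b^2 - (A+1)\bigr]$, then gives
\[
t = \frac{4b+1}{2} \pm 2b\sqrt{\frac{A}{A+1}}\sqrt{1 - \frac{A+1}{16b^2}},
\]
which, after substituting back $A = (2y+1)^2$ and factoring out $2b$, matches the formula in the proposition up to a sign.

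To fix the branch, I would evaluate at $y = 0$, where $A = 1$: the minus sign produces $t = 2b + (1 - \sqrt{8b^2 - 1})/2$, matching the claimed right endpoint of the domain of $\gbinv$, while the plus sign lands outside the interval of invertibility. A sanity check at $y = \gb(1) = -\tfrac{1}{2} + \sqrt{b - 1/4}$ (where $A = 4b - 1$) collapses the formula to $t = 1$, confirming the left endpoint. Combined with Step~1, this identifies $\gbinv$ on $[0, \gb(1)]$. The main non-routine step is recognising the two factorizations $(A+1)(4b+1)$ and $A(A+1)(16b^2 - (A+1))$, which make the square root collapse cleanly; everything else is a mechanical application of the quadratic formula and endpoint evaluation.
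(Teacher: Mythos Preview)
Your proposal is correct and follows essentially the same route as the paper: establish strict monotonicity of $\gb$, reduce $y=\gb(t)$ to the quadratic $t^2-(4b+1)t+\frac{(A+4b+1)^2}{4(A+1)}=0$ with $A=(2y+1)^2$, and select the smaller root. The only cosmetic differences are that you obtain monotonicity from the strict concavity of $\Gb$ (the paper computes $\gb'$ directly) and that you spell out the $u,v$ difference-of-squares manipulation where the paper simply asserts the equivalent quadratic.
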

We remark here, for the reader's convenience, that $\gb(2b+1/2)< 0$, and  that we will only consider $0 \leq x \leq \gb(1)$ which corresponds to $1 \leq \gbinv(x) \leq 2b + \frac{1-\sqrt{8b^2-1}}{2} < 2b + 1/2$. 
We also note that $\gb(1) = \Gb(1)\approx \sqrt{b}$, so for $0 \leq x \leq \gb(1)$ and $b$ large, we observe that \[\gbinv(x) \approx 2b\left(1 - \sqrt{\frac{(2x+1)^2}{(2x+1)^2+1}}\right).\]
\begin{proof}
It is clear that $\gb$ is continuous on $\left[1,2b + 1/2\right]$. We show that $\gb$ is strictly decreasing in the interior of this interval. For $t \in \left(1, 2b + 1/2\right)$, $\gb$ is differentiable and we have 
\begin{align*}
        \gb'(t) &= \Gb'(t) - \Gb'(t-1) 
        = \frac{2b-t}{2\sqrt{4bt - t^2}} - \frac{2b-t+1}{2\sqrt{4b(t-1) - (t-1)^2}} 
        \leq \frac{-1}{2\sqrt{4bt - t^2}}
        < 0.
\end{align*}    
Hence, we deduce that it is invertible on the interval $\left[1, 2b +1/2\right]$. We denote $x_b := \gb(t)$. To obtain an expression for $\gb^{-1}$, we derive an expression for $t$ in terms of $x_b$. Recalling that $g_b(t) = \Gb(t) - \Gb(t-1)$, we have 
\begin{equation*}
    x_b = -\frac{1}{2} + \sqrt{bt - \frac{t^2}{4}} - \sqrt{b(t-1) - \frac{(t-1)^2}{4}},
\end{equation*}
which is equivalent to
\begin{equation*}
    2x_b + 1 =  \sqrt{4bt - t^2} - \sqrt{4b(t-1) - (t-1)^2}.
\end{equation*}
Squaring and rearranging, we obtain that 
\begin{equation*}
    t^2 - (4b+1)t + \frac{1}{2}((2x_b + 1)^2 + 1 + 4b) = - \sqrt{4bt - t^2}\sqrt{4b(t-1) - (t-1)^2}.
\end{equation*}
Squaring again, it follows that 
\begin{align*}
&~\phantom{=}t^4  - (8b + 2)t^3 + (16b^2 + 8b + 1 + ((2x_b + 1)^2 + 1 + 4b))t^2 \\
     &\phantom{=}- (4b+1)((2x_b + 1)^2 + 1 + 4b)t  + \frac{1}{4}((2x_b + 1)^2 + 1 + 4b)^2 \\
          &=  t^4        - (8b + 2)t^3                + (16b^2 + 12b + 1)t^2  - (16b^2 + 4b)t,
\end{align*}
which in turn is equivalent to
\begin{equation}\label{eqn:quad-in-t-p}
    t^2 - (4b+1)t + \frac{((2x_b + 1)^2 + 1 + 4b)^2}{4((2x_b + 1)^2+1)} = 0.
\end{equation}
The unique root of~\eqref{eqn:quad-in-t-p} in $\left[1,2b + 1/2\right]$ is 
\begin{align*}
    \gb^{-1}(x) &= 2b+\frac{1}{2} - \frac{1}{2}\, \sqrt{(4b+1)^2 - \frac{((2x + 1)^2 + 1 + 4b)^2}{((2x + 1)^2+1)}} \\ 
    &= 2b\left(1 + \frac{1}{4b} - \sqrt{\frac{(2x+1)^2}{(2x+1)^2+1}}\sqrt{1 - \frac{(2x+1)^2+1}{16b^2}}\right). \tag*{\qedhere} 
\end{align*}
\end{proof}
We define three integer quantities $\taub{i}, \pb{i}, \smb{i}$ which we use in describing our strategy for OBreaker. We briefly explain the intuition behind each quantity after it is defined.
\begin{definition}\label{defn:tau_i}
For integers $b \geq 3$ and $1 \leq i \leq \gDomUB$, we define
    \begin{align*}
        \taub{i} &:= \begin{cases}
        1, \text{ if $\gb(i) - \floor{\gb(i)} \geq \frac{1}{2} + \left(\frac{16b}{i} - 4\right)^{-1/2}$}, \\ 
        0, \text{ otherwise.}
    \end{cases}
    \end{align*}
\end{definition}
In our strategy for OBreaker, $\taub{i} = 1$ corresponds to when OBreaker chooses $p_i$ or $q_i$ to be 1, alternating between the two cases to ensure that $\delta \in \{0,1\}$, where we recall that $\delta = (|B| - \ell) - (|A| - k)$ and that we seek to maximise $s = \min(|A| - k, |B| - \ell)$. 

Recalling that the range of $\gbinv$ is contained in $[1, 2b + 1/2]$, we have $1 \leq i \leq \gDomUB \leq 2b$  so that ${1}/{2} + \left({16b}/{i} - 4\right)^{-1/2} \in [1/2, 1]$. Essentially, if the fractional part of $\gb(i)$ is sufficiently large, then OBreaker can choose $x_i = \floor{\gb(i)}$ with either $p_i = 1$ or $q_i = 1$ in our strategy. This is stated more precisely in \cref{prop:cost-g-floor-at-most-b}. The requirement that the fractional part is at least $1/2$ can loosely be understood in the following way: if $A$ and $B$ have almost the same size, then to add another vertex to one of $A$ or $B$, we must have been able to add at least \textit{half} a vertex to both $A$ and $B$. 
\begin{definition}\label{defn:p_i-and-s_i}
    For integers $b \geq 3$ and $0 \leq i  \leq \gDomUB$, we define $\pb{i} := \sum_{j=1}^{i} \taub{j}\pmod{2}$ and $\smb{i} := \sum_{j=1}^{i} \floor{\gb(j)} +  \sum_{j=1}^{i} \taub{j} \pb{j-1}$.
\end{definition}
We show later that at the end of round $i$ in our strategy for OBreaker, the digraph will be $(i, \smb{\myfloor{i}}, \pb{\myfloor{i}})$-safe where $\myfloor{i} := \min (i, \gDomUB)$. 

Suppose that the digraph $D$ is $(i-1, \smb{i-1}, \pb{i-1})$-safe at the start of round $i$. The following proposition provides an upper bound on the number of edges OBreaker needs to then direct to apply \cref{prop:main-result-on-safety-transitions} (i) with $x_i = \floor{\gb(i)}$, $p_i = \pb{i-1}\taub{i}$, and $q_i = (1-\pb{i-1})\taub{i}$.
\begin{proposition}\label{prop:cost-g-floor-at-most-b}
For integers $b \geq 3$ and $1 \leq i \leq \gDomUB$, we have 
\begin{align*}
    \costf(\floor{\gb(i)}, \taub{i}, \smb{i-1}, \pb{i-1}, i) \leq b. 
\end{align*}
\end{proposition}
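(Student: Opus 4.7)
The plan is to show that $D_i := \costf(\floor{\gb(i)}, \taub{i}, \smb{i-1}, \pb{i-1}, i) - b$ is nonpositive, reducing the claim to the equality $\costf(\gb(i), 0, \Gb(i-1), 0, i) = b$ from Proposition~\ref{prop:cost-g-at-most-b}. Introducing the shorthand $\phi := \gb(i) - \floor{\gb(i)} \in [0,1)$ and $\Delta_s := \Gb(i-1) - \smb{i-1}$, and using the identity $L := 2\gb(i) + 2\Gb(i-1) + i = 2\Gb(i) + i = \sqrt{4bi - i^2}$ from Definition~\ref{defn:G}, a direct expansion and simplification of $\costf$ yields
\[
D_i = \phi^2 - \phi L + (\taub{i} + \pb{i-1})\floor{\gb(i)} + \taub{i}(\Gb(i-1) + i) - \Delta_s\bigl(2\floor{\gb(i)} + 1 + \taub{i}\bigr),
\]
after which one argues that the right-hand side is nonpositive.

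The key preliminary is a lower bound on $\Delta_s$. Unrolling Definition~\ref{defn:p_i-and-s_i} gives $\Delta_s = \sum_{j=1}^{i-1} \epb{j} - \lfloor |T_{i-1}|/2 \rfloor$, where $T_{i-1} := \{j \leq i-1 : \taub{j} = 1\}$ and $\pb{i-1} \equiv |T_{i-1}| \pmod{2}$. Since Definition~\ref{defn:tau_i} forces $\epb{j} > 1/2$ strictly for every $j \in T_{i-1}$, restricting the first sum to $T_{i-1}$ alone already shows $\Delta_s > \pb{i-1}/2$; in particular $\Delta_s \geq 0$ always, and $\Delta_s > 1/2$ whenever $\pb{i-1} = 1$.

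The proof then splits into four cases on $(\taub{i}, \pb{i-1}) \in \{0,1\}^2$. Case $(0,0)$ is immediate since every term of $D_i$ is visibly nonpositive. Cases $(0,1)$ and $(1,1)$ are handled by using the strict bound $\Delta_s > 1/2$ together with the coefficient $2\floor{\gb(i)} + 1 + \taub{i}$ to cancel the residual positive terms $(\taub{i} + \pb{i-1})\floor{\gb(i)}$ and $\taub{i}(\Gb(i-1) + i)$. The main obstacle is Case $(1,0)$, where $\Delta_s$ need only be nonnegative while $D_i$ still carries the large positive term $\Gb(i-1) + i$. Here I would invoke the lower bound $\phi \geq \tfrac{1}{2} + c_i$ forced by $\taub{i} = 1$, where $c_i := \tfrac{1}{2}\sqrt{(i-1)/(4b-i+1)}$, together with the identity $2\Gb(i-1) + (i-1) = \sqrt{(i-1)(4b-i+1)}$. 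Viewing $F(\phi) := \phi^2 - \phi L + \floor{\gb(i)} + \Gb(i-1) + i$ as a decreasing quadratic on the admissible range and evaluating at the extreme $\phi = \tfrac{1}{2} + c_i$, the difference-of-squares identity $(i/2)^2 - (c_i L)^2 = ib/(4b-i+1)$ controls the residual $F(\tfrac{1}{2} + c_i)$ tightly enough to be absorbed by the term $2\Delta_s(\floor{\gb(i)} + 1)$. The delicate algebraic matching of these bounds in Case $(1,0)$ is the hard step of the argument.
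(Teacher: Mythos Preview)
Your framework is essentially the paper's proof rewritten: the paper factors through the intermediate value $\costf(\floor{\gb(i)},\taub{i},\Gb(i-1)-\pb{i-1}/2,\pb{i-1},i)$ and proves two monotonicity inequalities, whereas you expand $D_i$ in one shot and split on $(\taub{i},\pb{i-1})$. The ingredients are identical. Your bound $\Delta_s>\pb{i-1}/2$ obtained by unrolling the sum is exactly the paper's inductive inequality $\smb{j}\le \Gb(j)-\pb{j}/2$; and when $\taub{i}=1$ both arguments ultimately rely on $\phi\ge \tfrac12+c_i$. Your expansion of $D_i$ and the difference-of-squares identity are correct.

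There is, however, a real gap in how you sort the cases. Case $(1,1)$ is \emph{not} dispatched by $\Delta_s>\tfrac12$ alone: using that bound against the coefficient $2\floor{\gb(i)}+2$ cancels only $\floor{\gb(i)}+1$, leaving
\[
\phi^2-\phi L+\floor{\gb(i)}+\Gb(i-1)+i-1 \;=\; F(\phi)-1,
\]
and bounding $F(\phi)$ still requires the $\taub{i}=1$ threshold, i.e.\ exactly your ``hard step'' from Case $(1,0)$. So $(1,1)$ should be grouped with $(1,0)$, not with $(0,1)$.

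More seriously, in Case $(1,0)$ you only have $\Delta_s\ge 0$, so the phrase ``absorbed by the term $2\Delta_s(\floor{\gb(i)}+1)$'' buys nothing in the worst case: you must prove $F(\phi)\le 0$ outright. Your plan of evaluating $F$ at the extreme $\phi=\tfrac12+c_i$ and controlling the residual via the identity $(i/2)^2-(c_iL)^2=ib/(4b-i+1)$ does not close by itself (for large $b$ one has $c_iL\to \tfrac12\sqrt{i(i-1)}<\tfrac{2i-1}{4}$, so the na\"ive residual bound $c_i^2-c_iL+i/2-1/4$ is positive). The paper's route here is cleaner and avoids this trap: write $L'=2\floor{\gb(i)}+2\Gb(i-1)+i=L-2\phi$, so that $F(\phi)=-\phi^2-\phi L'+\bigl(\tfrac{L'}{2}+\tfrac{i}{2}\bigr)$, and then show directly from the threshold $\phi\ge\tfrac12+c_i$ (after rewriting $c_i=\tfrac{i-1}{4\Gb(i-1)+2(i-1)}$) that $\phi L'\ge \tfrac{L'+i}{2}$, whence $F(\phi)\le -\phi^2\le 0$. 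Once this is in hand, Case $(1,1)$ follows a fortiori.
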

\begin{proof}
We show that 
\begin{align}\label{eqn:eqn-to-show-1}
\smb{i-1} \leq \Gb(i-1)- \pb{i-1}/2
\end{align}
and 
\begin{align}\label{eqn:eqn-to-show-2}
    \costf(\floor{\gb(i)}, \taub{i}, \Gb(i-1)- \pb{i-1}/2, \pb{i-1}, i)
    &\leq \costf(\gb(i), 0, \Gb(i-1), 0, i).
\end{align}
This is sufficient as combining the monotonicity of $\costf$ in $s$, \eqref{eqn:eqn-to-show-1} and \eqref{eqn:eqn-to-show-2} yields
\begin{align*}
    \costf(\floor{\gb(i)}, \taub{i}, \smb{i-1}, \pb{i-1}, i) 
    &\leq \costf(\floor{\gb(i)}, \taub{i}, \Gb(i-1)- \pb{i-1}/2, \pb{i-1}, i) \\
    &\leq \costf(\gb(i), 0, \Gb(i-1), 0, i)\\
    &= b+1/2,
\end{align*}
by \cref{prop:cost-g-at-most-b}. Since $\costf$ is integer-valued when its inputs are integers, it then follows that
$$\costf(\floor{\gb(i)}, \taub{i}, \smb{i-1}, \pb{i-1}, i) \leq b.$$
We prove~\eqref{eqn:eqn-to-show-1} 
by induction on $i$.  The case where $i=1$ holds trivially. Assume now that $\smb{i-1} \leq \Gb(i-1) - \pb{i-1}/2$ for some arbitrary integer $1 \leq i < \gDomUB$. Then we have that
\begin{align*}
    \smb{i} &= \smb{i-1} + \floor{\gb(i)} + \taub{i} \pb{i-1}\\ 
            &\leq \Gb(i-1) - \pb{i-1}/2 + \floor{\gb(i)} + \taub{i} \pb{i-1}\\  
            &\leq \Gb(i) - (\pb{i-1}/2 + \gb(i) - \floor{\gb(i)} - \taub{i} \pb{i-1}),
\end{align*}
by \cref{defn:p_i-and-s_i}, the inductive hypothesis, and \cref{defn:G}, respectively. Recall that 
$$\pb{i} = \sum_{j=1}^{i} \taub{j}\pmod{2} = \pb{i-1} + \taub{i}\pmod{2}$$ 
and that  $\taub{i} = 1$ implies $\gb(i) - \floor{\gb(i)} \geq {1}/{2}$. It can then be  verified that for all $\pb{i-1}, \taub{i} \in \{0,1\}$, we have
\begin{align*}
\pb{i}/2 = \frac{1}{2} (\pb{i-1} + \taub{i} \text{ (mod 2)})\leq \pb{i-1}/2 + \gb(i) - \floor{\gb(i)} - \taub{i}\pb{i-1}.    
\end{align*}
Thus, $\smb{i} \leq \Gb(i) - \pb{i}/2$ as desired.

Next, we prove \eqref{eqn:eqn-to-show-2} by showing that $R-L \geq 0$ where $R:=\costf(\gb(i), 0, \Gb(i-1), 0, i)$ and $L:=\costf(\floor{\gb(i)}, \taub{i}, \Gb(i-1)- \pb{i-1}/2, \pb{i-1}, i)$. \cref{defn:round-cost}, with some rearranging, yields
\begin{align*}
R &= \gb(i)^2 + (2\Gb(i-1) + i)\gb(i) + \Gb(i-1) + i \\
L &= \floor{\gb(i)}^2 + (\taub{i} + 2\Gb(i-1) + i)\floor{\gb(i)} + (1 + \taub{i})(\Gb(i-1)- \pb{i-1}/2 + i) + \taub{i}\pb{i-1}.
\end{align*}
Writing $\epb{i}$ for the fractional part $\gb(i) - \floor{\gb(i)}$ of $\gb(i)$, the subtraction can be written as 
\begin{align*}
R-L &=  \epb{i}^2  + \epb{i}(2\floor{\gb(i)} + 2\Gb(i-1) + i) - \taub{i}(\floor{\gb(i)} + \Gb(i-1) + i + \pb{i-1}/2) + \pb{i-1}/2.
\end{align*}
Hence, to prove that $R-L\ge 0$, it suffices to show that
\begin{equation}\label{eqn:eqn-to-show-3}
       \epb{i}(2\floor{\gb(i)} + 2\Gb(i-1) + i) - \taub{i}(\floor{\gb(i)} + \Gb(i-1) + i) \geq (\taub{i}-1)\pb{i-1}/2 - \epb{i}^2.
\end{equation}
If $\epb{i} < {1}/{2} + \left({16b}/{i} - 4\right)^{-1/2}$, we have $\taub{i} = 0$ by \cref{defn:tau_i}, and~\eqref{eqn:eqn-to-show-3} becomes
$$
\epb{i}(2\floor{\gb(i)} + 2\Gb(i-1) + i)  \geq -\pb{i-1}/2 - \epb{i}^2.
$$
This clearly holds as $2\Gb(i-1) + i \geq 0$ by \cref{defn:G}. Otherwise, suppose that $\epb{i} \geq {1}/{2} + \left({16b}/{i} - 4\right)^{-1/2}$ so that $\taub{i} = 1$ and the right-hand side of~\eqref{eqn:eqn-to-show-3} is just $-\epb{i}^2$. Recalling that by \cref{defn:G}, we have $\Gb(i) = -{i}/{2} + \sqrt{bi - {i^2}/{4}}$, it is easily verified that
${i}{(4\Gb(i) + 2i)^{-1}} = \left({16b}/{i} - 4\right)^{-1/2}$ so that $\epb{i} \geq {1}/{2} + {i}{(4\Gb(i) + 2i)^ {-1}}$. 
Thus, $\epb{i}(2\Gb(i) + i) \geq \Gb(i) + i$
so that 
$\epb{i}(2\Gb(i) + i) -\Gb(i) - i \geq 0$. Recalling that $\Gb(i) = \gb(i) + \Gb(i-1)$, by \cref{defn:G}, this is equivalent to
\begin{equation}\label{eqn:prop3.7-4}
    \epb{i}(2\floor{\gb(i)} + 2\Gb(i-1) + i) - (\floor{\gb(i)}+ \Gb(i-1) + i) \geq \epb{i} - 2\epb{i}^2.
\end{equation}
Comparing the right-hand side of \eqref{eqn:eqn-to-show-3} and \eqref{eqn:prop3.7-4}, we see that to prove \eqref{eqn:eqn-to-show-3}, it suffices to show that
\begin{equation*}
    \epb{i} - \epb{i}^2 \geq 0.
\end{equation*}
However, this is clear as $0 \leq \epb{i} < 1$. Hence, we have proved \eqref{eqn:eqn-to-show-3} which in turn proves \eqref{eqn:eqn-to-show-2} and thus, the proposition.
\end{proof}

The following proposition essentially reformulates \cref{prop:main-result-on-safety-transitions} within the context of our improved strategy for OBreaker in the Oriented-cycle game. Let $M = M(b) := \floor{\gbinv(1/2)}$ which is the last round $i$ where $\gb(i) \ge 1/2$, and which is approximately $(2-{4}/{\sqrt{5}})b \approx 0.2111b$. Round $M$ is, roughly speaking, the last round in which OBreaker uses \cref{prop:main-result-on-safety-transitions}~(i); from round $M+1$ onward, OBreaker uses  \cref{prop:main-result-on-safety-transitions}~(ii). For $i \in \Z_{>0}$, we write $\myfloor{i} := \min (i, M)$. 
\renewcommand{\gDomUB}{\ensuremath{M}}

\begin{proposition}
\label{prop:specialised-result-on-safety-transitions}
Let positive integers $n,b$ be large enough with $b \leq n \leq \smb{\gDomUB} + b$. For some positive integer $i$, let $D$ be an $(i-1, \smb{\myfloor{i-1}}, \pb{\myfloor{i-1}})$-safe digraph on $n$ vertices, witnessed by the UDB $(A,B)$, and suppose additionally  that $A \cup B \neq V$ when $i \le \gDomUB$. Assume that $|\mathcal{A}(D)| > 0$ and let $e \in \mathcal{A}(D)$. Then either $|\mathcal{A}(D+e)| = 0$, or there exists $S \subseteq \mathcal{A}(D+e)$ of size at most $b$ such that $(D+e) \cup S$ is $(i, \smb{\myfloor{i}}, \pb{\myfloor{i}})$-safe, where, if $i \leq \gDomUB$, the UDB $(\widetilde{A}, \widetilde{B})$ witnessing the $(i, \smb{\myfloor{i}}, \pb{\myfloor{i}})$-safety satisfies 
$\widetilde{A} \cup \widetilde{B} \neq V$. 
\end{proposition}

\begin{proof}
Let $D$ be a digraph which is $(i-1, \smb{\myfloor{i-1}}, \pb{\myfloor{i-1}})$-safe, witnessed by some UDB $(A,B)$, and assume that $|\mathcal{A}(D  + e)| > 0$. We first consider the case where $i \leq \gDomUB$. In this case, we have $\myfloor{i-1}=i-1$. We seek to apply \cref{prop:main-result-on-safety-transitions}~(i) with $p_i = \pb{i-1}\taub{i}$, $q_i= (1-\pb{i-1})\taub{i}$ and $x_i = \floor{\gb(i)}$. Note that our choice of $p_i$ and $q_i$ ensures that $\pb{i} = \pb{i-1} + q_i - p_i$ for all $\pb{i-1}$ and $\taub{i}$. By assumption, we have $A \cup B \neq V$ so it just remains to check that $2x_i + p_i + q_i < |V| - i - 2s_{b,i-1} - \pb{i-1}$. First, it can be easily verified that 
\begin{equation}\label{eqn:i-M-upper-bound}
i \leq M = \floor{\gbinv(1/2)} \leq \left(2 - \frac{4}{\sqrt{5}}\right)b + O(1).
\end{equation}
Next, recall that in the proof of \cref{prop:cost-g-floor-at-most-b}, we showed that $s_{b,i-1} \leq \Gb(i-1)$. Since $\Gb$ is monotonically increasing for $1 \leq i \leq M$, we have
\begin{equation}\label{eqn:sb-M-upper-bound}
s_{b,i-1} \leq \Gb(M) \leq \left(\frac{3}{\sqrt{5}} - 1\right)b + O(1).
\end{equation}
Finally, since $\gb$ is monotonically decreasing for $1 \leq i \leq M$, we have 
\begin{equation}\label{eqn:x-1-upper-bound}
x_i \leq \gb(i) \leq \gb(1) \leq \sqrt{b}.
\end{equation}
Thus, by \eqref{eqn:i-M-upper-bound}, \eqref{eqn:sb-M-upper-bound} and \eqref{eqn:x-1-upper-bound}, together with $b \leq n$ and $\taub{i}, \pb{i-1} \in \{0,1\}$, we have that
$$ |V| - i - 2s_{b,i-1} - \pb{i-1} \geq \left(1 - \frac{2}{\sqrt{5}}\right)b - O(1) > 0.1b > 2\sqrt{b} + 1 \geq 2x_i + p_i + q_i$$
for all sufficiently large $b$. Hence, by \cref{prop:main-result-on-safety-transitions}~(i), there exists $S \subseteq \mathcal{A}(D + e)$ such that  $|S|\le \costf(\floor{\gb(i)}, \taub{i}, \smb{i-1}, \pb{i-1}, i)$ and  $(D + e) \cup S$ is $(i, \smb{i-1} + x_i +p_i, \pb{i-1}+q_i-p_i)$-safe, witnessed by some UDB $(\widetilde{A}, \widetilde{B})$ with $\widetilde{A} \cup \widetilde{B} \neq V$. By \cref{prop:cost-g-floor-at-most-b}  we therefore have that $|S| \leq b$. By definition of $\smb{{i}}$ and $\pb{{i}}$, see \cref{defn:p_i-and-s_i}, and since $\myfloor{i}=i$, the claim follows in this case. 

Next, we consider the case where $i > \gDomUB$. In this case, we have that $\myfloor{i} = \myfloor{i-1} = \gDomUB$. By \cref{prop:main-result-on-safety-transitions}~(ii) and the assumption that $\smb{\gDomUB} \geq n-b$, there exists $S \subseteq \mathcal{A}(D  + e)$ such that $(D  +e) \cup S$ is $(i,\smb{\myfloor{i}},\pb{\myfloor{i}})$-safe and $|S| \leq n-\smb{\myfloor{i-1}} = n - \smb{\gDomUB} \leq b$.
\end{proof}
For technical reasons, we will apply \cref{prop:specialised-result-on-safety-transitions} with $b' = b - 1$ later on when formally describing our improved strategy for OBreaker. Furthermore, the index $i$ will track the number of times we have applied \cref{prop:specialised-result-on-safety-transitions}, rather than the actual round number. The reason for this distinction is that we may sometimes apply \cref{prop:specialised-result-on-safety-transitions} twice in a given round to ensure that OBreaker directs at least one edge.

In order to apply \cref{prop:specialised-result-on-safety-transitions}, we require $n \leq \smb{\gDomUB} + b$. Noting that we only consider $b = \Theta(n)$ as it is known that $t(n,\mathcal{C}) = \Theta(n)$, we derive a linear lower bound on $\smb{\gDomUB}$.
\begin{proposition}\label{prop:ineq-for-CB}
For every integer $b \geq 3$, we have that $\smb{\gDomUB} \geq \Cb b$ where 
\begin{align*}
    \Cb := - \frac{2+g_b(1)}{2b} + \sum_{a=0}^{\floor{g_b(1)}-1}  \left(2 - \sqrt{\frac{(2(a+ \phi_{b,a})+1)^2}{(2(a+ \phi_{b,a})+1)^2+1}} - \sqrt{\frac{(2a+3)^2}{(2a+3)^2+1}}\right)
\end{align*}
and 
\begin{align*} 
    \phi_{b,a} &:= 
        \frac{1}{2} +  \left(\frac{16b}{g^{-1}_{b}(a + 1/2)} - 4\right)^{-1/2}  \text{ for } 0 \leq a \leq \floor{g_b(1)} - 1.
\end{align*}
\end{proposition}
As we seek as tight a lower bound as possible on $\smb{\gDomUB}$, this means that the definition for our coefficient $C_b$ is unfortunately quite technical. In practice, $C_b \approx 0.275$ for large values of $b$. 
We recall  that 
\begin{equation}\label{eqn:smb-gdomub}
\smb{\gDomUB} = \sum_{i=1}^{\gDomUB} \floor{\gb(i)} + \sum_{i=1}^{\gDomUB} \taub{i} \pb{i-1}, 
\end{equation}
by \cref{defn:p_i-and-s_i}. The second sum accounts for the fact that in some rounds we add an additional vertex to $A$ or $B$, but not both. We mention here that this contribution turns out to be a significant proportion of $\smb{\gDomUB}$, and thus has a direct effect on improving the threshold bias by $\Theta(n)$. 

Note that $1/2\le  \phi_{b,a}  < 1$ for $0\le a \le \floor{\gb(1)}-1$. The lower bound $1/2\le  \phi_{b,a}$ is clear. Regarding the upper bound, $\gbinv$ is decreasing so $\gbinv(a + 1/2) \leq \gbinv(1/2) < 2b$. Hence, $\phi_{b,a}$ is decreasing in $a$ with $\phi_{b,a} \leq  \phi_{b,0} <  1$.
\begin{proof}
We first derive alternative expressions for both sums on the right-hand side of \eqref{eqn:smb-gdomub} individually. We split the sum $\Sigma_1:= \sum_{i=1}^{\gDomUB} \floor{\gb(i)}$ according to the different values of $\floor{\gb(i)}$. 
The function $\floor{\gb(i)}$ is decreasing in $i$, since $\gb(i)$ is, with values between $\floor{\gb(M)} \geq 0$ and $\floor{\gb(1)}$. 
For $0\le a \le \floor{\gb(1)},$ let $I_a$ be the set of all integers $i\in [1,M]$ such that $\floor{\gb(i)} = a$. 
Then 
\begin{align*}
\Sigma_1  &= \sum_{a=1}^{\floor{\gb(1)}} a\cdot |I_a|  
\     =\ \floor{\gb(1)} \cdot \floor{\gbinv(\floor{\gb(1)})} +  \sum_{a=1}^{\floor{\gb(1)}-1}a \cdot \big(\floor{\gbinv(a)} - \floor{\gbinv(a+1)}\big). 
\end{align*}
Observe that this is a telescoping sum which simplifies to 
\begin{equation}\label{eqn:exact-alt-for-X}
    \Sigma_1 = \sum_{a=1}^{\floor{\gb(1)}} \floor{\gbinv(a)}.
\end{equation}
We now turn to $\Sigma_2:= \sum_{i=1}^{\gDomUB} \taub{i} \pb{i-1}$ which is the second sum in~\eqref{eqn:smb-gdomub}. 
Recalling \cref{defn:tau_i}, we have that $\pb{i-1}  = \sum_{j=1}^{i-1} \taub{j} \pmod{2}$. Let $1 \leq i_1 < i_2 < \dots < i_k  < \dots < i_m \leq \gDomUB$ be the values of $i$ for which  $\taub{i} = 1$. For all other values of $i$, we have $\taub{i} = 0$. Thus, we have 
\[\pb{i_k-1} = \sum_{j=1}^{i_k-1} \taub{j} \pmod{2} = \sum_{1 \leq j < k} \taub{i_j} \pmod{2} = k-1 \pmod{2}.\] 
Hence, we have that $\taub{i} \pb{i-1} = 1$ if and only if $i = i_k$ for some even $1 \leq k \leq m$. 
Therefore,
\begin{align*}
\Sigma_2 =  \sum_{i=1}^{\gDomUB} \taub{i} \pb{i-1} = \floor{\frac{1}{2}\sum_{i \in \{i_1, \dots, i_m\}} 1}.
\end{align*}
Again, since $\taub{i} = 1$ if $i \in \{i_1, \dots, i_m\}$ and otherwise $\taub{i} = 0$, this is equivalent to 
\begin{align}\label{eqn:exact-alt-for-tau-delta}
\Sigma_2
&= \floor{\frac{1}{2}\sum_{i=1}^{\gDomUB} \taub{i}}.
\end{align}
We  now derive a lower bound on $\sum_{i=1}^{\gDomUB} \taub{i}$ similarly to how we derived \eqref{eqn:exact-alt-for-X}. 
By \cref{defn:tau_i} we have $\taub{i} = 1$ if and only if \begin{align}\label{aux:nexBound}
\gb(i) - \floor{\gb(i)} &\geq \frac{1}{2} + \left(\frac{16b}{i} - 4\right)^{-1/2}.
\end{align} 
We claim that this inequality holds  for all   integers $i$ such that 
\begin{align}\label{aux:304}
\floor{\gbinv(a + 1)} + 1 \leq i \leq \gbinv(a + \minfrac{a}),
\end{align}
for all integers $0 \leq a \leq \floor{\gb(1)} - 1 $, and 
where we recall the definition of $\minfrac{a}$ from the proposition statement. 
Applying $\gb$, which is decreasing, to~\eqref{aux:304} and using $\gbinv(a+1) <  \floor{\gbinv(a+1)}+1$, we obtain that  $$a+1 > \gb(i) \geq a + \minfrac{a}.$$ 
In particular, $a \geq \floor{\gb(i)}$, and thus,
\begin{align*}
\gb(i) - \floor{\gb(i)}  &\geq \minfrac{a}  
= \frac{1}{2} + \left(\frac{16b}{\gbinv(a + 1/2)} - 4\right)^{-1/2}.
\end{align*}
By assumption, we have $i \leq \gbinv(a + \minfrac{a})$. Since $\gbinv$ is decreasing and $\minfrac{a} \geq \frac{1}{2}$, this then implies that $i \leq \gbinv(a + \frac{1}{2})$. As $i$ is an integer, this  means  that $i \leq \floor{\gbinv(a + \frac{1}{2})}$. Finally, noting that $\left(\frac{16b}{t} - 4\right)^{-1/2}$ is increasing in $t$, we have 
that~\eqref{aux:nexBound} holds for all such  $i$. It follows that 
\begin{equation}\label{eqn:tight-Z-lower-bound}
    \sum_{i=1}^{\gDomUB} \taub{i} \geq  \sum_{a=0}^{\floor{\gb(1)} - 1} (\floor{\gbinv(a+ \minfrac{a})} - \floor{\gbinv(a+1)}).
\end{equation}   
Putting together~\eqref{eqn:smb-gdomub}, \eqref{eqn:exact-alt-for-X}, \eqref{eqn:exact-alt-for-tau-delta} and \eqref{eqn:tight-Z-lower-bound}, we obtain the lower bound
\begin{align}\label{eqn:lower-bound-on-smb-gdomub-ginvb}
    \smb{\gDomUB} & = \Sigma_1+\Sigma_2 \nonumber \\ &\geq \sum_{a=1}^{\floor{\gb(1)}} \floor{\gbinv(a)} + \floor{\frac{1}{2}\sum_{a=0}^{\floor{\gb(1)} - 1} \big(\floor{\gbinv(a+ \minfrac{a})} - \floor{\gbinv(a+1)} \big) }\nonumber \\ 
   & \geq -1 + \frac{1}{2} \sum_{a=0}^{\floor{\gb(1)}-1} (\floor{\gbinv(a+ \minfrac{a})} + \floor{\gbinv(a+1)}).
\end{align}
We derive a lower bound on $\floor{\gbinv(x)}$ for $0 \leq x \leq \gb(1)$. Recalling \cref{prop:g-is-invertible}, we have 
\begin{align*}
    \gbinv(x) &= 2b\left(1 + \frac{1}{4b} - \sqrt{\frac{(2x+1)^2}{(2x+1)^2+1}}\sqrt{1-\frac{(2x+1)^2+1}{16b^2}}\right) \\ 
    &\geq \frac{1}{2} +2b\left(1 - \sqrt{\frac{(2x+1)^2}{(2x+1)^2+1}}\right),
\end{align*}
and taking the floor yields 
\begin{equation}\label{eqn:lower-bound-on-ginvx}
    \floor{\gbinv(x)} \geq -\frac{1}{2} +2b\left(1 - \sqrt{\frac{(2x+1)^2}{(2x+1)^2+1}}\right).
\end{equation}
Putting together \eqref{eqn:lower-bound-on-smb-gdomub-ginvb} and  \eqref{eqn:lower-bound-on-ginvx} we have 
\begin{align*}
    \smb{\gDomUB} &\geq -1 + \frac{1}{2}\sum_{a=0}^{\floor{\gb(1)}-1} \left(-1 + 2b\left(2 - \sqrt{\frac{(2(a+ \minfrac{a})+1)^2}{(2(a+ \minfrac{a})+1)^2+1}} - \sqrt{\frac{(2a+3)^2}{(2a+3)^2+1}}\right)\right) \\ 
    &\geq b \, \Bigg(- \frac{2 + \gb(1)}{2b} + \sum_{a=0}^{\floor{\gb(1)}-1} \left(2 - \sqrt{\frac{(2(a+ \minfrac{a})+1)^2}{(2(a+ \minfrac{a})+1)^2+1}} - \sqrt{\frac{(2a+3)^2}{(2a+3)^2+1}}\right)\Bigg) \\
    &= C_bb. \tag*{\qedhere}
\end{align*}
\end{proof}
We show that $\Cb$ is increasing in $b$ which is needed when proving our result on the threshold bias $t(n,\mathcal{C})$. 
\begin{proposition}\label{prop:CB-increasing}
$\Cb$ is increasing in $b$.
\end{proposition}
\begin{proof}
We first check that $-\frac{2+\gb(1)}{2b}$, which is the first term in the definition of $\Cb$, is increasing in $b$. By \cref{defn:G}, we have $\gb(1) = \Gb(1) - \Gb(0) =  -\frac{1}{2}  + \sqrt{b - \frac{1}{4}}$. Thus, we have 
$$ -\frac{2+\gb(1)}{2b} = -\frac{3/2 + \sqrt{b - 1/4}}{2b},$$
which is easily seen to be increasing in $b$.
Next, we show that $\minfrac{a}$ is decreasing in $b$, for every fixed $a$ in $[0,  \floor{\gb(1)} - 1]$. Recalling \cref{prop:g-is-invertible}, we have
\begin{align*}
    \minfrac{a} = \frac{1}{2} +  \left(\frac{8}{1 + \frac{1}{4b} - \sqrt{\frac{(2a+2)^2}{(2a+2)^2+1}}\sqrt{1-\frac{(2a+2)^2+1}{16b^2}}} - 4\right)^{-1/2}.
\end{align*}
For any fixed $a$, it is clear that $\frac{(2a+2)^2+1}{16b^2}$ is decreasing in $b$. Thus, it follows that $1 + \frac{1}{4b} - \sqrt{\frac{(2a+2)^2}{(2a+2)^2+1}}\sqrt{1-\frac{(2a+2)^2+1}{16b^2}}$ is decreasing in $b$ so that $\minfrac{a}$ is decreasing in $b$. Hence, for all $a$ in $[0,  \floor{\gb(1)} - 1]$, the summands in the definition of $C_b$ are increasing in $b$. Moreover, these summands are positive and the upper limit of summation $\floor{\gb(1)} - 1$ is increasing in $b$. Thus, we conclude that $\Cb$ is increasing in $b$.
\end{proof}
Having determined the values of $b$ for which OBreaker is able to carry out their improved strategy, we are now able to prove results regarding the threshold bias.
\begin{theorem}\label{thm:main-result-on-threshold-bias}
Fix an integer $B \geq 3$. Then, for all sufficiently large integers $n$,
$$t(n,\mathcal{C}) \leq  \frac{n}{1+\CB} + O_B(1).$$
\end{theorem}
\begin{proof}
Let $n,b$ be positive integers and fix some integer $B \geq 3$. For $n < (1+\CB) B$, OBreaker wins for $b \geq (1+\CB) B$ via the trivial strategy. For $n \geq (1+\CB) B$ and $b \geq \frac{n}{1+\CB} + (1+C_B) B$ sufficiently large, we show that OBreaker has a winning strategy. If $n < b$ then OBreaker can again win via the trivial strategy so we may assume that $n \geq b$. Denote $b' := b - 1$ and $M':= \floor{g_{b'}^{-1}(1/2)}$, and recall the definition of $s_{b',M'}$. It can be computationally verified that $(1+C_3) 3 > 1$, and thus $(1+C_B) B > 1$ for $B \geq 3$. Therefore, $b' \geq \frac{n}{1+\CB}$. By \cref{prop:ineq-for-CB},  we have that $b' + s_{b',M'} \geq (1+C_{b'}) b'$. The assumption $n \geq (1+C_B) B$ implies $b' \geq \frac{n}{1+C_B} \geq B$. Since $C_B$ is increasing in $B$ by \cref{prop:CB-increasing}, we have $C_{b'} \geq C_B$ so that $b' + s_{b',M'}  \geq (1+\CB) b' \geq n.$ Putting this together, we have $b' \leq n \leq b' + s_{b',M'}$ which is required for applying \cref{prop:specialised-result-on-safety-transitions}. 

We will prove that, starting from an empty digraph, OBreaker can maintain that the digraph of the game remains safe if the game has not yet ended, and is otherwise an acyclic tournament. Since being safe entails being acyclic by \cref{prop:safe-implies-no-threats}, this proves that OBreaker has a winning strategy. More precisely, we prove by induction on the rounds that at the start of every round while the game has not yet ended, the digraph is $(i, s_{b', \myfloor{i}}, \delta_{b', \myfloor{i}})$-safe for some $i \in \Z_{\geq 0}$. Note that in this context, $\myfloor{i}$ naturally refers to $\min (i, M')$ rather than $\min (i, M)$. Letting $(A_{i}, B_{i})$ denote the UDB witnessing the $(i, s_{b', \myfloor{i}}, \delta_{b', \myfloor{i}})$-safety of $D$, we also show that $A_{i} \cup  B_{i} \neq V$ for $1 \leq i \leq M'$ which is required for applying  \cref{prop:specialised-result-on-safety-transitions}.

At the start of the game, the digraph is empty and this is indeed  $(0,s_{b', 0},\delta_{b',0})$-safe and is trivially witnessed by the empty UDB $(\emptyset, \emptyset)$. 

For the induction step, we show that if the game is not over and the digraph is $(i-1, s_{b', \myfloor{i-1}}, \delta_{b', \myfloor{i-1}})$-safe for some $i \in \Z_{>0}$ at the start of some round, then the digraph is $(j, s_{b', \myfloor{j}}, \delta_{b', \myfloor{j}})$-safe for some $j \in \Z_{\geq 0}$ when the round ends. Assuming that $A_{i-1} \cup  B_{i-1} \neq V$ if $i \leq M'$, we also show that $A_j \cup B_j \neq V$ if $j \leq M'$. Let $D$ be the $(i-1, s_{b', \myfloor{i-1}}, \delta_{b', \myfloor{i-1}})$-safe digraph and let $e$ be the edge directed by OMaker in the current round. If $\mathcal{A}(D + e)$ is empty, then the final tournament is acyclic by \cref{prop:safe-implies-no-threats}. Otherwise, OBreaker applies \cref{prop:specialised-result-on-safety-transitions} to $D$ and $e$ and directs the obtained set of arcs $S \subseteq \mathcal{A}(D + e)$ so that $(D+e) \cup S$ is $(i, s_{b', \myfloor{i}}, \delta_{b', \myfloor{i}})$-safe and $|S| \leq b'$. Observe that \cref{prop:specialised-result-on-safety-transitions} ensures that if $i \leq M'$, then $A_i \cup B_i \neq V$. If $|S| > 0$, then OBreaker ends their current turn. Otherwise, suppose that  $|S| = 0$. In this case, OBreaker then directs an arbitrary available arc $e' \in \mathcal{A}(D + e)$. If $\mathcal{A}(D \cup \{e, e'\})$ is empty, then the final tournament is acyclic by \cref{prop:safe-implies-no-threats}. Otherwise, OBreaker applies \cref{prop:specialised-result-on-safety-transitions} on $D + e$ and $e'$ and directs the obtained set of arcs $S' \subseteq \mathcal{A}(D \cup \{e, e'\})$ so that $(D \cup \{e, e'\}) \cup S'$ is $(i+1, s_{b', \myfloor{i+1}}, \delta_{b', \myfloor{i+1}})$-safe and $|S'| \leq b'$. Here, \cref{prop:specialised-result-on-safety-transitions} ensures that if $i + 1 \leq M'$, then $A_{i+1} \cup B_{i+1} \neq V$. In all cases, OBreaker directs at least one edge and no more than $b' + 1 = b$ edges. Thus, either the game ends and the final digraph is an acyclic tournament, or OBreaker is able to ensure that after their move, the digraph is $(j, s_{b', \myfloor{j}}, \delta_{b', \myfloor{j}})$-safe for some $j \in \Z_{\geq 0}$. Moreover, we have that $A_j \cup B_j \neq V$ if $j \leq M'$.

Hence, OBreaker has a winning strategy in all cases for $b \geq  \frac{n}{1+C_B} + (1+C_B) B$ which implies that $t(n,\mathcal{C}) \leq  \frac{n}{1+\CB} + O_B(1)$.\qedhere
\end{proof}
To obtain \cref{thm:main-result-on-threshold}, we can apply \cref{thm:main-result-on-threshold-bias} with a fixed value of $B$. 
\mainresultonthreshold*
\begin{proof}
This is immediate from \cref{thm:main-result-on-threshold-bias} by setting $B=10^9$ and noting that
$\frac{1}{1 + C_{10^9}} < 0.7841$ by evaluation.
\end{proof}
Applying \cref{thm:main-result-on-threshold-bias} with a larger value of $B$ would, in principle, yield a tighter upper bound on $t(n,\mathcal{C})$. However, in practice, after evaluating $\frac{1}{1+C_B}$ with larger values of $B$, we suspect that $\lim_{B \to \infty} \frac{1}{1+C_B} > 0.784$ so that one cannot actually obtain a result which is materially better in this manner.

\section{Concluding Remarks}
We achieve a significantly tighter upper bound on $t(n,\mathcal{C})$ with our new strategy. 
Yet, we believe that there is slightly more room for improvement. 
Recall that our strategy is based on maintaining safety via \cref{prop:main-result-on-safety-transitions}. Choosing $x_i$ to be $\floor{\gb (i)}$ enabled us to achieve $(n-b)$-safety quickly while directing at most $b$ edges in each round, and while still being able to effectively lower bound the size of the UDB. A better strategy would be to instead choose $x_i$  to be as large as possible in each round. Here, obtaining explicit expressions for the size of the UDB, as well as for $t(n,\mathcal{C})$ seems to be challenging. Computational experiments (with $b = 10^6$) suggest that the upper bound on the threshold bias can be improved to $t(n,\mathcal{C}) \leq 0.7692n + O(1)$.

We believe that the optimal strategy for OBreaker involves maintaining safety, but not necessarily via \cref{prop:main-result-on-safety-transitions}. Using \cref{prop:main-result-on-safety-transitions} necessitates that $|A| - k$ and $|B| - \ell$ differ by at most one, but an optimal strategy likely requires no such constraint. For instance, if OMaker is only directing edges into $A$ or only directing edges out from $B$, which we believe to be their best response to OBreaker's strategy of building a safe digraph, then it may be in OBreaker's interest to more aggressively add more vertices to one of $A$ or $B$ in response. However, we saw that in practice this seemed to only result in a relatively minor improvement over the aforementioned stronger strategy. This leads us to the conjecture mentioned in the introduction.
\begin{conjecture}
    $t(n,\mathcal{C}) \geq 3n/4 - O(1)$.
\end{conjecture}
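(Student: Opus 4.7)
The statement is a conjecture with no known proof, so my plan can only sketch a plausible route of attack informed by the paper's structural analysis. The target is a strategy for OMaker that forces a directed cycle for every bias $b \leq 3n/4 + O(1)$, improving substantially on the current $n/2-2$ lower bound of Ben-Eliezer, Krivelevich and Sudakov. The rough idea is to turn OBreaker's structural defense into an obstruction OMaker can attack directly.

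First, I would aim for a partial converse to Proposition 2.4: any OBreaker strategy that avoids a directed cycle for sufficiently many rounds must implicitly maintain a decomposition at least as structured as a safe digraph --- a UDB $(A,B)$ with acyclic (indeed $\alpha$-like) pieces on $V \setminus A$ and $V \setminus B$, up to a bounded defect. This would convert OMaker's task from playing against an arbitrary adversary into playing against an explicit invariant. Following the intuition from the concluding remarks, OMaker's strategy would then alternate between directing arcs \emph{into} $A$ from $V \setminus A$ and \emph{out of} $B$ to $V \setminus B$, specifically picking arcs that force OBreaker to either absorb a new vertex into $A$ (or $B$), paying up to $|B|$ (respectively $|A|$) repair edges, or to grow the corresponding $\alpha$-structure by its full rank.

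A potential function of the form $\Phi(D) = |A| + |B| + (\text{sum of $\alpha$-ranks})$, or a weighted refinement, would track OMaker's progress. The aim is to prove that each OMaker move forces $\Phi$ to increase by at least $(4/3)(s+i)$, against OBreaker's per-round budget of $b$. Summing over the $O(n)$ rounds needed to reach $A \cup B = V$ and balancing against a trivial endgame should give the target bound, with the constant $3/4$ emerging as the reciprocal of the gain ratio $4/3$ forced by OMaker at each step.

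The main obstacle is establishing this per-round lower bound. Proposition 2.5 shows that OBreaker can restore safety at cost roughly $x^2 + (2s+i)x + (s+i) + O(1)$ after optimizing $x$; matching this with a $4/3$ slope requires forcing OBreaker into the \emph{worst} repair option --- in particular, forcing \emph{simultaneous} growth of both $\alpha$-structures on $V\setminus A$ and $V \setminus B$ rather than a one-sided response. Such a combinatorial lower bound is notoriously difficult in Maker-Breaker-style analyses, since one must rule out every clever OBreaker reaction for every possible game history. I expect this to be the hard step, and a plausible intermediate milestone would be proving $t(n,\mathcal{C}) \geq (1/2 + \varepsilon)n$ for some explicit $\varepsilon > 0$ using these ideas in a more forgiving regime before attempting the tight constant $3/4$.
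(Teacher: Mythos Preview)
The statement is a \emph{conjecture}; the paper offers no proof of it. What the paper does provide, in the concluding remarks, is a brief indication of why the authors believe it and a hint toward a route: they write that OMaker's longest-path strategy ``is close to optimal, and a more refined proof can show that this strategy is winning for OMaker for significantly larger values of $b$.'' So the paper's own suggested attack is a sharper analysis of an \emph{explicit} OMaker strategy, not a structural characterisation of all OBreaker play.

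Your plan diverges from this, and its first step is where it fails. You propose to show that any acyclic-maintaining OBreaker play must implicitly carry a safe-like decomposition ``up to a bounded defect.'' This is an extremely strong structural claim, and nothing in the paper supports it: safe digraphs are a \emph{sufficient} device for OBreaker, not a necessary one. An arbitrary acyclic digraph need not admit a UDB $(A,B)$ with $\alpha$-structured pieces in any useful sense, and there is no reason OBreaker could not play in a way that never resembles a safe digraph. Without this converse, the rest of the argument --- forcing growth of $A$, $B$, or the $\alpha$-ranks, and tracking a potential $\Phi$ --- has nothing to grip, because OBreaker is under no obligation to maintain the objects you are trying to inflate.

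Even if one grants the converse, there is a second gap: your potential $\Phi$ measures OBreaker's repair cost, but OMaker's winning condition is closing a cycle, not exhausting a budget. You would need an additional lemma of the form ``if OBreaker cannot afford to restore safety, then OMaker can close a cycle within $O(1)$ further moves,'' and that lemma again presupposes that safety is the only obstruction to a cycle --- which is precisely the unproved converse. A more promising line, consistent with the paper's hint, is to fix a concrete OMaker strategy (e.g.\ extend the longest directed path, or a threat-multiplying variant) and bound directly the number of OBreaker arcs needed each round to block all cycle closures; the $3/4$ should then fall out of that edge-count rather than from a structural duality.
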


As a final remark, recall that OMaker's best known strategy involves extending the longest path and was shown to be winning for $b \leq n/2 - 2$. We believe that such a strategy is close to optimal, and a more refined proof can show that this strategy is winning for OMaker for significantly larger values of $b$.

\bibliographystyle{plain}
\bibliography{references}
\end{document}